\theoremstyle{plain}
\newtheorem{proposition}{Proposition}
\newtheorem{theorem}[proposition]{Theorem} 
\newtheorem{lemma}[proposition]{Lemma}
\newtheorem{corollary}[proposition]{Corollary}
\theoremstyle{definition}
\newtheorem{definition}[proposition]{Definition}
\newtheorem{remark}[proposition]{Remark}
\newcommand {\Z} {\mathbb{Z}}
\newcommand{\bra}{\langle}
\newcommand{\ket}{\rangle}
\DeclareMathOperator{\Aut}{Aut}
\DeclareMathOperator{\GL}{GL}
\DeclareMathOperator{\ch}{char}
\DeclareMathOperator{\Ind}{Ind}
\DeclareMathOperator{\D}{\mathsf{D}}
\DeclareMathOperator{\F}{\mathbb{F}}
\DeclareMathOperator{\Hom}{Hom}
\newcommand{\hilb}{\F[W]^G_+\F[W]}
\newcommand{\Hilb}{\F[W]^G_+\F[W]_+}
    \title{On the Noether number of $p$-groups}
    \date{}
    \author{K\'alm\'an Cziszter \thanks{Partially supported  by the National Research, Development and Innovation Office (NKFIH)  grants PD113138,  ERC~HU~15 118286, K115799 and K119934.}\\ 
    	\small Alfr\'ed R\'enyi Institute of Mathematics,  Hungarian Academy of Sciences\\
    	\small Re\'altanoda u. 13 -- 15, 1053 Budapest, Hungary }
\begin{document}

\maketitle

\begin{abstract}
A  group of order $p^n$ ($p$ prime) has an indecomposable polynomial invariant of degree at least $p^{n-1}$ if and only if the group  has a cyclic subgroup of index at most $p$ or it is isomorphic to 
the elementary abelian group of order 8 or the Heisenberg group of order 27. \\
   {\it Keywords: }    polynomial invariants, degree bounds, zero-sum sequences
\end{abstract}

\section{Introduction}

Let $G$ be a finite group and $V$ a $G$-module over a field $\F$ of characteristic  not dividing the group order $|G|$. 
The Noether-number $\beta(G,V)$ is the maximal degree in a minimal generating set 
of the ring of polynomial invariants $\mathbb F[V]^G$. It is known  that $\beta(G,V) \le |G|$  (see \cite{noether}, \cite{fogarty}, \cite{fleishmann}). 
Even more, it was observed that $\beta(G) := \sup_{V}(G,V)$ (where $V$ runs over all $G$-modules over the base field $\F$) is typically much less than  $|G|$. 
For an algebraically closed base field of characteristic zero it was proved in \cite{schmid} that $\beta(G) = |G|$ holds only if $G$ is cyclic. 
Then it turned out that $\beta(G) \le \frac 3 4 |G|$  for any non-cyclic group $G$ (see \cite{DH} and \cite{sezer}). 
Moreover  $\beta(G) \ge \frac 1 2 |G|$ holds  
 if and only if $G$ has a cyclic subgroup of index at most two, with the exception of four particular groups of small order (see \cite[Theorem~1.1]{CzD:3}).
Recently some  asymptotic extensions of this result were given in \cite{HP}. 
Our goal in the present article is to establish  the following  strengthening  of this kind of results for the class of $p$-groups: 

\begin{theorem}\label{fo2}
If $G$ is a finite $p$-group for a prime $p$ and 
the characteristic of the base field $\F$ is zero or greater than $p$  then 
 the inequality
\begin{align} \label{fotul}\beta(G) \ge \frac 1 p |G| \end{align}
holds if and only if $G$ has a cyclic subgroup of index at most $p$ or $G$ is  the elementary abelian group $C_2\times C_2 \times C_2$ or  the Heisenberg group of order 27. 
\end{theorem}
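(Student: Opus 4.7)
The subgroup monotonicity $\beta(H) \le \beta(G)$ (valid in non-modular characteristic) gives $\beta(G) \ge \beta(C) = |C| \ge |G|/p$ whenever $G$ has a cyclic subgroup $C$ of index at most $p$, since the Noether number of a cyclic group equals its order. For the two sporadic cases one appeals to $\beta(C_2\times C_2\times C_2)=4=8/2$ (which follows from $\D(C_2^3)=4$) and $\beta(\mathrm{Heis}_{27})\ge 9=27/3$ (via a direct construction of an indecomposable invariant of degree $9$ on a suitable $\mathrm{Heis}_{27}$-module).

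\textbf{Necessity, abelian case.} I would induct on $|G|$. For abelian $G$ one has $\beta(G)=\D(G)$, and for $G=\bigoplus_i C_{p^{a_i}}$ with $n=\sum_i a_i$ the explicit formula $\D(G)=1+\sum_i(p^{a_i}-1)$ applies. The inequality $\D(G)\ge p^{n-1}$ then reduces to a concrete numerical condition on the partition $(a_1,\dots,a_k)$, and an elementary case check shows that its only solutions are the cyclic groups, the rank-two groups $C_{p^{n-1}}\times C_p$, and (only when $p=2$) the elementary abelian group $C_2^3$. The borderline inequality $1+3(p-1)\ge p^2$, which holds exactly for $p\le 2$, is what singles out $C_2^3$.

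\textbf{Necessity, non-abelian case.} Assume $G$ is non-abelian, has no cyclic subgroup of index $\le p$, and is not one of the exceptions. Choose a central subgroup $Z\cong C_p$; the classical multiplicative bound gives
\[
\beta(G)\;\le\;\beta(Z)\cdot\beta(G/Z)\;=\;p\cdot\beta(G/Z).
\]
If $G/Z$ also falls outside the "good" list, the induction hypothesis yields $\beta(G/Z)<|G/Z|/p$, whence $\beta(G)<|G|/p$ and we are done.

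\textbf{Main obstacle.} The technical core is the residual case in which $\bar G:=G/Z$ is good but $G$ itself is not. Here one must enumerate the central extensions $1\to C_p\to G\to\bar G\to 1$ of this shape — with $\bar G$ ranging over cyclic $p$-groups, $C_{p^{n-1}}\times C_p$, the dihedral, generalized quaternion, semidihedral and modular $2$-groups, $C_2^3$, and $\mathrm{Heis}_{27}$ — and for each such $G$ exhibit a normal subgroup $N$ with respect to which a sharper bound succeeds. Natural tools include taking $N$ to be a maximal normal abelian subgroup (so that $\beta(G)\le\beta(N)\cdot[G:N]$ is strong), using refined multiplicative inequalities tailored to the specific pair $(N,G/N)$, and direct invariant-theoretic computation for the finitely many small residual cases. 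The delicate numerics here are what must exclude the Heisenberg-type analogues $\mathrm{Heis}_{p^3}$ for $p\ge 5$: this is a reappearance, in non-abelian guise, of the same arithmetic $1+3(p-1)<p^2$ that already forced $C_2^3$ to be the unique elementary-abelian exception.
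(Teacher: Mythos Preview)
Your sufficiency argument and the abelian case are correct and match the paper's treatment.

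Your reduction in the non-abelian case is structurally different from the paper's. You quotient by a central $C_p$ and use the multiplicative bound $\beta(G)\le p\cdot\beta(G/Z)$; the paper instead locates a normal subgroup $N\cong C_p\times C_p$ (which exists in any non-cyclic $p$-group) and uses the sharper reduction $\beta(G)\le\beta_{\beta(G/N)}(N)$ involving the generalized Noether numbers $\beta_k$. This lets the paper show directly that $G/N$ must be cyclic, after which a short structural analysis of the extension $C_p^2\triangleleft G$ with cyclic quotient reduces everything to the single family $H_p$. Your route via central $Z$ is not wrong in principle, but your ``main obstacle'' section leaves an infinite case analysis unfinished: the central extensions of the ``good'' quotients by $C_p$ form infinite families indexed by $n$, not ``finitely many small residual cases.''

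The genuine gap is your treatment of the Heisenberg groups $H_p$ for $p\ge 5$. You write that excluding them is ``a reappearance, in non-abelian guise, of the same arithmetic $1+3(p-1)<p^2$.'' This is incorrect: that inequality governs $\beta(C_p^3)=\D(C_p^3)=3p-2$, but there is no analogous formula for $\beta(H_p)$. No multiplicative or $\beta_k$-type reduction yields $\beta(H_p)<p^2$; the best such bound is $\beta(H_p)\le\beta_{\beta(C_p)}(C_p\times C_p)=p^2+p-1$, and with a maximal abelian $N\cong C_p^2$ the multiplicative bound gives only $\beta(H_p)\le p(2p-1)$. Proving $\beta(H_p)<p^2$ is the main technical content of the paper (its Theorem~\ref{beta_H_p}): it requires a direct analysis of the $H_p$-invariant rings, combining a zero-sum combinatorial argument over $C_p\times C_p$ with relations in the Hilbert ideal and polarization operators. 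This is not ``delicate numerics'' but substantial invariant-theoretic work, and without it the theorem is not proved.
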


The proof of  Theorem~\ref{fo2} will be reduced to the study of a single critical case, the Heisenberg group $H_p$, which is the extraspecial group of order $p^3$ and exponent  $p$ for an odd prime $p$.
We  prove about this the following result: 

\begin{theorem}\label{beta_H_p}
For any prime $p \ge 5$  and base field $\F$ of characteristic $0$ or greater than $p$ we have $ \beta(H_p) < p^2$.
\end{theorem}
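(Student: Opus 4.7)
The plan is a two-step quotient reduction combined with a multi-graded analysis that keeps track of the central character. Let $Z=Z(H_p)=\langle c\rangle$ and $\bar G=H_p/Z\cong C_p\times C_p$. For any $H_p$-module $V$ in the relevant characteristic, decompose $V=V_0\oplus V_1\oplus\cdots\oplus V_{p-1}$ according to the central character, where $c$ acts on $V_k$ as the scalar $\zeta^k$ for a fixed primitive $p$-th root of unity $\zeta$. Then $V_0$ is a sum of linear characters of $\bar G$, and each $V_k$ with $k\ne 0$ is a sum of $p$-dimensional faithful irreducibles. The polynomial ring $\F[V]$ inherits a $\Z^p$-multigrading by the degrees $(d_0,\ldots,d_{p-1})$ in the summands $V_k$, and the subalgebra $\F[V]^Z$ is spanned by multi-homogeneous elements satisfying $\sum_k k\,d_k\equiv 0\pmod p$; since $Z$ is central, $\F[V]^{H_p}=(\F[V]^Z)^{\bar G}$.

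A crude estimate combines $\beta(Z)=p$ (so $\F[V]^Z$ is generated, as an $\F$-algebra, in polynomial degrees at most $p$) with $\beta(\bar G)=D(C_p\oplus C_p)=2p-1$ (Olson's theorem) to give $\beta(H_p,V)\le p(2p-1)=2p^2-p$. This exceeds $p^2$ by nearly a factor of two, so the heart of the proof is a sharpening. I would fix an indecomposable $H_p$-invariant $f$ of total degree at least $p^2$ and derive a contradiction from the multi-grading: the piece of $f$ supported entirely in $\F[V_0]$ has polynomial degree at most $2p-1<p^2$ by Olson, so $f$ must genuinely involve some $V_k$ with $k\ne 0$. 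Writing $f$ as a $\bar G$-invariant combination of minimal $Z$-invariant monomials (equivalently, minimal zero-sum sequences in $\Z_p$, of length at most $D(\Z_p)=p$), the task becomes to show that any such combination of total degree at least $p^2$ splits as a product modulo lower-degree $H_p$-invariants.

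The main obstacle is this splitting lemma, which is where both the non-abelian structure and the hypothesis $p\ge 5$ enter. A natural tool is induction from a maximal abelian normal subgroup $A\cong C_p\times C_p$ of index $p$: the $p$-dimensional irreducibles arise as $\Ind_A^{H_p}\chi$ for non-trivial linear characters $\chi$ of $A$, and every $H_p$-invariant is a transfer of an $A$-invariant whose degree is controlled by $\beta(A)=2p-1$, so a high total degree forces the $A$-invariant lift to split into many factors, which should then yield a factorisation after transfer provided $p\ge 5$. The case $p=3$ is excluded by Theorem~\ref{fo2}, which asserts $\beta(H_3)\ge 9$; this tells us the splitting argument must be quantitatively tight, with the resulting bound taking a form $\beta(H_p)\le f(p)$ satisfying $f(p)<p^2$ precisely when $(p-1)(p-4)>0$, i.e.\ for $p\ge 5$.
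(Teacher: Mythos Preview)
Your proposal identifies the right ambient setup but stops precisely where the actual work begins. You correctly single out the maximal abelian normal subgroup $A\cong C_p\times C_p$ with $H_p/A\cong C_p$, and you are right that the crude reduction through $Z$ only gives $2p^2-p$. However, the sentence ``a high total degree forces the $A$-invariant lift to split into many factors, which should then yield a factorisation after transfer provided $p\ge 5$'' is not a proof; it is exactly the statement of the theorem, rephrased. Many $A$-invariant factors do \emph{not} automatically yield an $H_p$-invariant factor after averaging by $\langle b\rangle$, and nothing in your outline explains why they should. Your closing speculation that the bound takes the shape $f(p)<p^2\iff (p-1)(p-4)>0$ is also unfounded: the paper obtains the uniform bound $\beta(H_p)\le p^2-1$ for all $p\ge 5$, with no such arithmetic threshold.

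The mechanism the paper supplies, and which your proposal lacks, is the following. Working modulo the Hilbert ideal $\F[W]^G_+\F[W]$, one proves a concrete relation (Lemma~\ref{trukk}): for $u_1,\ldots,u_{p-1}\in\F[W]^A_+$ and any $g\in\langle b\rangle$,
\[
u_1\cdots u_{p-1}\equiv u_1^{\,g}\,u_2^{-g}\,u_3\cdots u_{p-1}\pmod{\F[W]^G_+\F[W]}.
\]
This comes from the fact that the only zero-sum-free sequences of length $p-1$ over $\widehat{C_p}$ are the constant ones, and it is the hinge on which everything turns. Combined with a combinatorial analysis of weight sequences over $C_p\times C_p$ (Proposition~\ref{separ}, requiring $p\ge 5$ via the inequality $p^2-2p+1\ge 3p$), this relation is iterated to show that any monomial of degree $\ge p^2-1$ can be modified, modulo the Hilbert ideal, until it becomes divisible by a prescribed ``homologous'' monomial of degree $\le p$ (Proposition~\ref{amorf}). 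One then manufactures an explicit $G$-invariant divisor (the norm $\prod_{g\in\langle b\rangle}x^g$), or reduces to that case by polarisation operators $\Delta_i^{s,t}$, which are $G$-equivariant and preserve the Hilbert ideal; this last step is where the hypothesis $\mathrm{char}(\F)=0$ or $>p$ is used. None of these ingredients---the exchange relation, the homology/weight-sequence machinery, or the polarisation argument---appears in your plan, and without them the ``splitting lemma'' you name as the main obstacle remains entirely open.
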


The paper is organised as follows. 
Section~\ref{Sec:zerosum}  contains some technical results on zero-sum sequences over  abelian groups that will be needed later. 
In Section~\ref{Sec:reduction} we reduce the proof of Theorem~\ref{fo2} to that of Theorem~\ref{beta_H_p}. 
Then in Section~\ref{Sec:invariant} we  explain the main invariant theoretic idea behind the proof of Theorem~\ref{beta_H_p}  which is also applicable in a more general setting. 
The proof itself of Theorem~\ref{beta_H_p}   will then be carried out in full detail  in  Section~\ref{Sec:3c}. 
Finally, Section~\ref{Sec:4}  completes our argument by showing that for the case $p=3$   we have  $\beta(H_3)= 9$   in any non-modular characteristic.

\section{Some preliminaries on zero-sum sequences}\label{Sec:zerosum}

We  follow here in our notations and terminology the usage fixed in \cite{CzDG}. 
Let $A$ be an abelian group  noted additively. 
By a sequence $S$ over a subset $A_0 \subseteq A$ we mean  a multiset of elements of $A_0$. 
They form a free commutative monoid with respect to  concatenation, denoted by $S\cdot T$, 
and  unit element  the empty sequence $\emptyset$; this has to be distinguished from $0$, the  zero element of $A$.
The sequence $ a\cdot a \cdots a$, obtained by the $k$-fold repetition of an element $a \in A$, is denoted by $a^{[k]}$; this has to be distinguished
from the product $ka \in A$.
The multiplicity of an element $a\in A$ in a sequence $S$ is denoted by $\mathsf v_a(S)$. 
We also write $a \in S$ to indicate that $\mathsf v_a(S) >0$. 
We say that $T$ is a subsequence of $S$, and write $T\mid S$, if there is a sequence $R$ such that $S = T \cdot R$. 
In this case we also write $R = S \cdot T^{[-1]}$. 
The \emph{length} of a sequence, denoted by $|S|$, can be expressed as $\sum_{a \in A} \mathsf v_a(S)$,
whereas the \emph{sum} of a sequence $S = a_1 \cdots a_n$ is  $\sigma(S) := a_1 + \ldots + a_n \in A$ 
and by convention we set $\sigma(\emptyset) = 0$.
We say that $S$ is a \emph{zero-sum sequence} if $\sigma(S) =0$.

The relevance of zero-sum sequences for our topic is due to the fact  that for an abelian group $A$ the Noether number $\beta(A)$ coincides with the Davenport constant $\D(A)$,
which is defined as the maximal length of a zero-sum sequence over $A$  not containing any non-empty, proper zero-sum subsequence  (see e.g. \cite[Chapter~5]{CzDG}). 
Its value for $p$-groups is given by the following formula  \cite[Theorem~5.5.9]{bible}:
\begin{align}\label{olson_p}
\D(C_{p^{n_1}} \times \dots \times C_{p^{n_r}} )= \sum_{i=1}^r (p^{n_i} -1) +1.
\end{align}

A variant of this notion is the $k$th Davenport constant $\mathsf D_k(A)$ defined for any $k \ge 1$ as the maximal length of a zero-sum sequence $S$
that cannot be factored as the concatenation $S = S_1 \cdots S_{k+1}$ of  non-empty zero-sum sequences $S_i$ over $A$. 
Its numerical value is much less known (for some recent results  see \cite{freeze-schmid}); we shall only need the fact
that  according to \cite[Theorem~6.1.5.2]{bible}:
\begin{align}
\label{halter_koch}
\D_{k} (C_p \times C_p) &= kp +p-1.
\end{align}

The following  consequence of the definition of $\mathsf D_k(A)$ will also be used:

\begin{lemma}[\cite{bible}, Lemma~6.1.2]\label{HK}
 Any sequence $S$ over an abelian group $A$ of length at least $\mathsf D_k(A)$ factors
as $S=S_1 \cdots S_k\cdot R$ with some non-empty zero-sum sequences $S_i$.
\end{lemma}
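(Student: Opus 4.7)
The plan is to reduce the lemma directly to the definition of $\mathsf D_k(A)$ by extending the given $S$ by a single element to make it a zero-sum sequence that is longer than $\mathsf D_k(A)$.

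Concretely, I would set $T := S \cdot (-\sigma(S))$, so that $\sigma(T) = 0$ and $|T| = |S| + 1 \geq \mathsf D_k(A) + 1$. Because $T$ is a zero-sum sequence of length strictly greater than $\mathsf D_k(A)$, the defining maximality property of $\mathsf D_k(A)$ forces the existence of a factorization $T = T_1 \cdots T_{k+1}$ into non-empty zero-sum subsequences. The appended term $-\sigma(S)$ lies in exactly one of these factors; after relabeling, assume $-\sigma(S) \in T_{k+1}$. Then setting $S_i := T_i$ for $1 \leq i \leq k$ and letting $R$ be $T_{k+1}$ with one copy of $-\sigma(S)$ removed yields $S = S_1 \cdots S_k \cdot R$, and each $S_i$ is, by construction, a non-empty zero-sum subsequence of $S$, as required.

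The only point that needs a quick sanity check is the degenerate situation $\sigma(S) = 0$, in which the appended term is the identity $0 \in A$; but $T = S \cdot 0$ is still a zero-sum sequence of length $|S| + 1 > \mathsf D_k(A)$, so the very same argument applies, with $R$ obtained from the relevant factor by deleting one occurrence of $0$. I do not anticipate any real obstacle here: the lemma is essentially an unpacking of the definition, and the main (very mild) trick is recognizing that one is allowed to pay a single extra symbol to force zero-sum-ness of the enlarged sequence, and then to absorb that symbol into the unrestricted remainder $R$.
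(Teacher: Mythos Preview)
Your argument is correct. The paper does not actually supply a proof of this lemma; it simply cites \cite{bible}, Lemma~6.1.2. Your trick of appending $-\sigma(S)$ to obtain a zero-sum sequence $T$ of length $|S|+1 > \mathsf D_k(A)$, invoking the defining maximality of $\mathsf D_k(A)$ to split $T$ into $k+1$ non-empty zero-sum factors, and then absorbing the appended element into the unconstrained remainder $R$, is exactly the standard proof and is sound in all cases (including $\sigma(S)=0$). There is nothing to add.
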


We define for any sequence $S$ over $A$ the set of all partial sums of $S$  as $\Sigma(S) := \{ \sigma(T): \emptyset \neq T \mid S \} $. 
If $0 \not\in \Sigma(S)$ then $S$ is called \emph{zero-sum free}.
  The next result could also be deduced from the Cauchy-Davenport theorem (see \cite[Corollary~5.2.8.1]{bible}) but  we  provide here an elementary proof for the reader's convenience:

\begin{lemma}\label{CD}
Let $p$ be a prime.
Then for  any  sequence $S$ over $C_p \setminus \{ 0 \}$  we have  $|\Sigma(S)|\ge \min \{p,  |S|\}$.
\end{lemma}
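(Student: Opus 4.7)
The plan is to induct on the length $n = |S|$. The base case $n = 1$ is immediate: if $S = a$ with $a \neq 0$, then $\Sigma(S) = \{a\}$, so $|\Sigma(S)| = 1 = \min\{p, 1\}$.

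For the inductive step, write $S = T \cdot a$ with $a \in C_p \setminus \{0\}$. By construction, every non-empty subsequence of $S$ either lies in $T$, equals the singleton $a$, or is a non-empty subsequence of $T$ together with $a$; hence
\begin{equation*}
\Sigma(S) \;=\; \Sigma(T) \,\cup\, \bigl(a + (\Sigma(T) \cup \{0\})\bigr).
\end{equation*}
If $|\Sigma(T)| \ge p$, then already $\Sigma(T) = C_p$ and we are done. Otherwise, by induction $|\Sigma(T)| \ge \min\{p, n-1\} = n-1$, so it suffices to show the union above is strictly larger than $\Sigma(T)$, i.e.\ that $a + (\Sigma(T) \cup \{0\}) \not\subseteq \Sigma(T)$.

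The crux is precisely this non-containment. Suppose for contradiction that $a + (\Sigma(T) \cup \{0\}) \subseteq \Sigma(T)$. Taking the image of $0$ gives $a \in \Sigma(T)$, and then iterating the closure property $\Sigma(T) + a \subseteq \Sigma(T)$ forces $ka \in \Sigma(T)$ for every $k \ge 1$. Since $a \neq 0$ and $C_p$ is cyclic of prime order, $a$ generates $C_p$, so $\Sigma(T) \supseteq C_p$, contradicting $|\Sigma(T)| < p$. Hence $\Sigma(S)$ strictly contains $\Sigma(T)$, giving $|\Sigma(S)| \ge |\Sigma(T)| + 1 \ge n$, which is $\min\{p, n\}$ as required.

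There is no real obstacle here; the only subtlety is remembering to include the singleton $\{a\}$ in the enlargement (by writing the right-hand summand as $a + (\Sigma(T) \cup \{0\})$ rather than $a + \Sigma(T)$), and exploiting that $C_p \setminus \{0\}$ consists entirely of generators of $C_p$, which is exactly where the primality of $p$ enters.
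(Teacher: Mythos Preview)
Your proof is correct and follows essentially the same approach as the paper: induction on $|S|$, the decomposition $\Sigma(S\cdot a)=\Sigma(S)\cup\{a\}\cup(a+\Sigma(S))$, and the observation that failure of strict growth forces $\Sigma(S)$ to contain the cyclic group generated by $a$, hence all of $C_p$. The only cosmetic differences are that the paper starts the induction at $|S|=0$ and phrases the last step as ``$\Sigma(S)$ is a subgroup containing $a$'' rather than iterating $ka\in\Sigma(T)$.
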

\begin{proof}
We use induction on the length of $S$. 
For $|S| = 0$ the claim is trivial.
Otherwise consider a sequence $S\cdot a$ where the claim holds for $S$. We have   $\Sigma(S \cdot a) = \Sigma(S) \cup \{ a\} \cup (a + \Sigma(S))$, 
where $a+ \Sigma(S) := \{a + s : s \in \Sigma(S) \}$. 
Then either $|\Sigma(S\cdot a)| \ge |\Sigma(S)| +1$, or else  $a \in \Sigma(S)$ and $a+\Sigma(S) = \Sigma(S)$, that is when $\Sigma(S)$ is a subgroup of $C_p$ containing $a$. 
But since $C_p$ has only two subgroups and by assumption $\Sigma(S) \ni a \neq 0$,  this means that $\Sigma(S) = C_p$. 
\end{proof}

\begin{lemma}[\cite{bible}, Theorem 5.1.10.1] \label{zerosumfree}
A sequence $S$ over $C_p$ ($p$ prime) 
of length $|S| =p-1$ is zero-sum free if and only if $S= a^{[p-1]}$ for some $a \in C_p \setminus \{0\}$.
\end{lemma}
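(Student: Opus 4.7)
The $(\Leftarrow)$ direction is immediate: if $S = a^{[p-1]}$ with $a \in C_p \setminus \{0\}$, every non-empty subsequence has the form $a^{[k]}$ for some $1 \le k \le p-1$, and its sum $ka$ is non-zero in $C_p$ because $a \neq 0$ and $p \nmid k$.

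For the $(\Rightarrow)$ direction, my plan is a prefix-sum argument that bypasses Lemma~\ref{CD} entirely. Assume $S$ is zero-sum free of length $p-1$ (so in particular $0 \notin S$). Fix any enumeration $a_1, \dots, a_{p-1}$ of the terms of $S$ and set $s_k := a_1 + \cdots + a_k$ for $0 \le k \le p-1$, with $s_0 := 0$. Whenever $s_i = s_j$ with $0 \le i < j \le p-1$, the consecutive block $a_{i+1} \cdots a_j$ would be a non-empty zero-sum subsequence, contradicting zero-sum freeness. Hence the $p$ values $s_0, \dots, s_{p-1}$ are pairwise distinct in $C_p$ and therefore exhaust all of $C_p$.

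The decisive step is to vary the enumeration. For each $1 \le i \le p-2$, swapping $a_i$ and $a_{i+1}$ yields another enumeration of the same multiset $S$ whose prefix sums $s_k'$ coincide with $s_k$ for $k \neq i$ (a quick check: $s_{i+1}' = s_{i-1} + a_{i+1} + a_i = s_{i+1}$, and positions $k<i$ and $k>i+1$ are manifestly unchanged), while $s_i' = s_{i-1} + a_{i+1}$. Applying the preceding reasoning to this swapped enumeration gives $\{s_0', \dots, s_{p-1}'\} = C_p$ as well, so two $p$-element subsets of the $p$-element set $C_p$ agreeing in every coordinate $k \neq i$ must also agree at position $i$. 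This forces $s_i = s_i'$, i.e.\ $a_i = a_{i+1}$. Iterating over $i=1,\dots,p-2$ yields $a_1 = \cdots = a_{p-1}$, so $S = a_1^{[p-1]}$ with $a_1 \in C_p \setminus \{0\}$. I expect the only genuine friction to lie in the swap step; once one is comfortable that a $p$-subset of $C_p$ with one specified element removed is uniquely determined, the conclusion is immediate.
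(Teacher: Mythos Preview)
Your proof is correct. The paper does not give its own proof of this lemma --- it is simply quoted from \cite{bible} --- so there is nothing to compare against in the strict sense.

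Your prefix-sum argument is clean and self-contained: the observation that the $p$ prefix sums of any enumeration of $S$ are forced to be a bijection onto $C_p$, together with the transposition trick showing that adjacent entries must coincide, is an elegant way to pin down $S$. The one point worth stating a shade more explicitly is why the bijection argument applies to the \emph{swapped} enumeration as well: a consecutive block $a_{i+1}'\cdots a_j'$ in the swapped list is still a subsequence of the multiset $S$, so zero-sum freeness still rules out any coincidence among the $s_k'$. You gesture at this, but making it explicit removes the only place a reader might hesitate. After that, the conclusion $s_i=s_i'$ is immediate since two bijections onto $C_p$ that agree off one index must agree everywhere.

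As a side remark, the paper's own toolkit (Lemma~\ref{CD}) would let you conclude $\Sigma(S)=C_p\setminus\{0\}$ whenever $S$ is zero-sum free of length $p-1$, but extracting the uniqueness statement $S=a^{[p-1]}$ from that alone takes more work than your direct argument; so your decision to bypass Lemma~\ref{CD} is well judged.
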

\begin{lemma}[\cite{bible}, Proposition~5.7.7.1] \label{eta}
Let $p$ be a prime and $S$ be a sequence over $C_p \times C_p$ of length $|S|\ge 3p-2$.
Then $S$ has a zero-sum subsequence $X \mid S$  of length $p$ or $2p$. 
\end{lemma}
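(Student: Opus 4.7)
The lemma asks for a zero-sum subsequence of a prescribed length---one of $p$ or $2p$---not merely of bounded length, so the Davenport-type bound $\mathsf D(C_p\times C_p)=2p-1$ alone is not sharp enough. The natural device for enforcing both a zero-sum condition and a cardinality constraint modulo $p$ is the Chevalley-Warning theorem, and that is the approach I would take.

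By passing to a sub-multiset I would first reduce to $|S|=N:=3p-2$. Label $S=a_1\cdots a_N$ with $a_i=(a_{i,1},a_{i,2})\in\F_p\times\F_p$ and consider the three polynomials in variables $x_1,\dots,x_N$ over $\F_p$
\begin{align*}
f_j(x) &= \sum_{i=1}^N a_{i,j}\,x_i^{p-1}\qquad (j=1,2),\\
f_3(x) &= \sum_{i=1}^N x_i^{p-1}.
\end{align*}
Each has degree $p-1$, so their combined total degree $3(p-1)=3p-3$ is strictly smaller than the number $N=3p-2$ of variables. Chevalley-Warning then says that the number of common zeros of $f_1,f_2,f_3$ in $\F_p^N$ is divisible by $p$; since the trivial zero is always a solution, at least one non-trivial common zero $x^*$ exists.

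Setting $I:=\{i:x_i^*\neq 0\}$ and reading $x_i^{p-1}$ via Fermat's little theorem as the indicator of the event $i\in I$, the equations $f_1=f_2=0$ translate to $\sum_{i\in I}a_i=0$ in $C_p\times C_p$, while $f_3=0$ says $|I|\equiv 0\pmod p$. Combined with $1\le|I|\le N=3p-2<3p$, this forces $|I|\in\{p,2p\}$, and the corresponding sub-multiset of $S$ is exactly the desired zero-sum subsequence $X$.

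I do not expect any real obstacle beyond one small bookkeeping point: the preliminary truncation to length exactly $3p-2$ is essential, because without the upper bound $|I|<3p$ the argument would merely yield some positive multiple of $p$ as the length, which for long $S$ could overshoot the target set $\{p,2p\}$.
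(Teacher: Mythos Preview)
Your argument is correct and is the classical proof of this result, going back to Olson; the paper itself does not prove the lemma but simply cites \cite[Proposition~5.7.7.1]{bible}, where essentially the same Chevalley--Warning argument appears. The truncation to $|S|=3p-2$ that you flag is indeed necessary and is handled correctly.
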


We close this section with a technical result.  Its motivation and  relevance will become apparent  through its application  in the proof of Proposition~\ref{amorf}.
For any  function $\pi$ defined on $A$ and any sequence $S$ over  $A$   we  will write $\pi(S)$  for the sequence obtained from $S$ by applying $\pi$ element-wise.

\begin{lemma} \label{nullak}
Let $S$ be a sequence  over $C_p$  
of length  $|S| \ge p^2-1$. 
If  we have $\mathsf v_0(S)\ge p+1$ then  $S= S_1 \cdots S_{\ell} \cdot R$, 
where  each $S_i$ is a non-empty zero-sum sequence and $\ell \ge 2p-1$. 
\end{lemma}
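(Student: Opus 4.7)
The plan is to treat the zeros and the non-zeros of $S$ separately. Each element $0$ is by itself a non-empty zero-sum sequence and so contributes a singleton factor for free; the non-zero part is handled by successively extracting atomic zero-sum subsequences, using only $\D(C_p)=p$.

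Write $m := \mathsf v_0(S)$ and $T := S \cdot 0^{[-m]}$, so $T$ is a sequence over $C_p \setminus \{0\}$ of length $|S|-m$. If $m \ge 2p-1$ we are done at once by taking $2p-1$ copies of $0$ as $S_1, \ldots, S_{2p-1}$ and letting $R$ absorb the rest. Assume therefore $p+1 \le m \le 2p-2$, use all $m$ zeros as singleton factors, and set $j := 2p-1-m \in \{1, \ldots, p-2\}$; it then suffices to find $j$ disjoint non-empty zero-sum subsequences inside $T$. A direct computation gives
\begin{align*}
|T| - jp \;\ge\; (p^2-1-m) - (2p-1-m)p \;=\; m(p-1) - (p^2-p+1),
\end{align*}
and since $m \ge p+1$ the right-hand side is non-negative, as $(p+1)(p-1)=p^2-1 \ge p^2-p+1$ holds for every $p \ge 2$. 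Hence $|T| \ge jp$.

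Now one extracts zero-sums from $T$ one at a time. Because $\D(C_p) = p$, every sequence over $C_p$ of length $\ge p$ contains a non-empty zero-sum subsequence, and a minimal such one (an atom) has length at most $\D(C_p) = p$. Therefore removing an atom reduces the length by at most $p$, and starting from $T$ with $|T| \ge jp$ we can iterate the procedure at least $j$ times to produce disjoint non-empty zero-sum subsequences $T_1, \ldots, T_j$. Together with the $m$ singleton-zero factors these yield $\ell \ge m + j = 2p-1$ factors in total, as required. The whole argument is elementary and I expect no serious obstacle; the only step that has to be pinned down precisely is the arithmetic inequality above, which moreover shows the hypothesis $\mathsf v_0(S)\ge p+1$ to be sharp (a sequence consisting of $p$ zeros together with $p^2-p-1$ copies of a fixed nonzero element admits only $2p-2$ non-empty zero-sum factors).
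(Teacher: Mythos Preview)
Your proof is correct and rests on the same two observations as the paper's: every occurrence of $0$ is a free singleton zero-sum factor, and each remaining atom over $C_p$ has length at most $\D(C_p)=p$. The paper merely organises this as a one-line contradiction---take the maximal factorisation, bound $|S|\le \mathsf v_0(S)+(\ell-\mathsf v_0(S))p+(p-1)$, and solve for $\mathsf v_0(S)$---whereas you build the factorisation directly; the underlying arithmetic is identical.
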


\begin{proof} 
Let $\ell$ denote the maximal integer such that $S =S _1 \cdots S_{\ell} \cdot R$ 
for some  non-empty zero-sum sequences $S_i$. Then each $S_i$ is irreducible, hence $|S_i| \le  p$ and $R$ is zero-sum free, hence $|R|\le p-1$. 
Assuming that $\ell \le 2p-2 $  we get
\[ p^2-1 \le |S| \le \mathsf v_0 (S) + (\ell - \mathsf v_0(S))p + p-1 \le (p-1) (2p+1 - \mathsf v_0(S)) \] 
whence $\mathsf v_0(S) \le p$ follows, in contradiction with our assumption.
\end{proof}

\begin{proposition} \label{separ} 
Let $A= C_p \times C_p$ for some prime $p \ge 5$ and $\pi:A \to C_p$ the projection onto the first component.
If $S$ is a sequence over $A$ with $|S| \ge p^2-1$ and $\mathsf v_0(\pi(S)) \le p$ then 
for any given  subsequence  $T \mid S $  of length $|T| \le p-1$ there is a
 a factorisation  $S = S_1 \cdots S_{p-1}\cdot R$, where each $S_i$ is a non-empty zero-sum sequence over $A$, 
while $T \mid S \cdot (S_1 \cdot S_2)^{[-1]}$ and $\Sigma(\pi(S_1)) = C_p$.

\end{proposition}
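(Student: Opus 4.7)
The plan is to locate $S_1$ first, then a disjoint $S_2$ with $|S_1|+|S_2| \le 2p$, and finally apply Lemma~\ref{HK} to the remainder. The key observation is that whenever $\pi(S_1)$ contains at least $p$ nonzero entries, Lemma~\ref{CD} applied to those nonzero entries already forces $\Sigma(\pi(S_1)) = C_p$.

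First I would delete $T$ from $S$ together with every element whose $\pi$-image is $0$. The resulting subsequence $S'$ has length
\[ |S'| \ge |S| - |T| - \mathsf v_0(\pi(S)) \ge (p^2-1) - (p-1) - p = p^2-2p, \]
which is $\ge 3p-2$ for $p \ge 5$, so Lemma~\ref{eta} provides a zero-sum subsequence $Y \mid S'$ of length $p$ or $2p$, and by construction $\pi(Y)$ has no zero entry.

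Next I would turn $Y$ into the desired pair $(S_1,S_2)$ of disjoint nonempty zero-sums, both disjoint from $T$ and of total length at most $2p$. If $|Y|=2p$ then $|Y|$ exceeds $\D(C_p \times C_p)=2p-1$, so $Y$ splits as $Y = Y_1 \cdot Y_2$ with each $Y_i$ a nonempty zero-sum; I would take $S_1$ to be the longer of the two, whose length is $\ge p$, so $\Sigma(\pi(S_1)) = C_p$ by Lemma~\ref{CD}, and $S_2$ the other half. If $|Y|=p$ I would set $S_1 := Y$ and apply Lemma~\ref{eta} a second time to $S \cdot (T \cdot S_1)^{[-1]}$, whose length is still $\ge p^2-2p \ge 3p-2$, obtaining a nonempty zero-sum $Z$ of length $p$ or $2p$; in the subcase $|Z|=2p$ the same reducibility splitting produces $S_2$ of length $\le p$.

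For the final step, $|S \cdot (S_1 \cdot S_2)^{[-1]}| \ge |S| - 2p \ge p^2-1-2p = \D_{p-3}(C_p \times C_p)$ by \eqref{halter_koch}, so Lemma~\ref{HK} factors the remainder as $S_3 \cdots S_{p-1} \cdot R$, and since $S_1, S_2$ were chosen disjoint from $T$ the required $T \mid S \cdot (S_1 \cdot S_2)^{[-1]}$ is automatic. The main obstacle is precisely the case $|Y|=2p$, where the naive choice $S_1:=Y$ would already consume $2p$ entries and leave no budget for $S_2$; the reducibility splitting trick, available exactly because $\D(C_p\times C_p) = 2p-1 < 2p$, is what keeps $|S_1|+|S_2|\le 2p$ in every branch, and the hypothesis $p \ge 5$ enters exactly as the inequality $p^2-2p \ge 3p-2$ needed to invoke Lemma~\ref{eta} at both stages.
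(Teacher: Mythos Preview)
Your proof is correct and follows essentially the same route as the paper's: remove $T$ and the $\pi$-zero elements, invoke Lemma~\ref{eta} to obtain a zero-sum of length $p$ or $2p$, split the $2p$ case using $\D(C_p\times C_p)=2p-1$, re-apply Lemma~\ref{eta} in the length-$p$ case to find $S_2$, and finish with Lemma~\ref{HK} and Lemma~\ref{CD}. The only cosmetic omission is that you do not spell out the subcase $|Z|=p$ (where one simply takes $S_2:=Z$), but this is obvious.
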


\begin{proof}
Let $S^* \mid S \cdot T^{[-1]}$ be the maximal subsequence such that $0 \not\in \pi(S^*)$.
Then by assumption $|S^*| \ge |S| -2p +1\ge 3p$, as $p \ge 5$, so there is a zero-sum subsequence $X \mid S^*$ of length $p$ or $2p$  by Lemma~\ref{eta}. We have two cases:

(i) If $|X| = 2p$ then  
$X =S_1\cdot S_2$ for some non-empty zero-sum sequences $S_1, S_2$
such that $|S_1| \ge p$ and $|S_2| \le p$, as $\mathsf D(C_p \times C_p) =2p-1$ by \eqref{olson_p}. 
 
(ii) If $|X| = p$ then we can take $S_1 := X$. 
Then we have $|S \cdot (S_1\cdot T)^{[-1]}| \ge |S| -2p+1 \ge 3p$, so again by Lemma~\ref{eta} we  find a non-empty zero-sum sequence $S_2 \mid S \cdot (S_1\cdot T)^{[-1]}$
of length $|S_2| \le p$ as above. 

In both cases $T \mid S \cdot (S_1 \cdot S_2)^{[-1]}$ and $|S_1 \cdot S_2| \le 2p$ by construction.
Consequently $|S \cdot (S_1 \cdot S_2)^{[-1]}| \ge |S| -2p \ge p^2-2p-1 = \mathsf D_{p-3}(C_p\times C_p)$,
hence by Lemma~\ref{HK} we have a factorisation $S \cdot (S_1 \cdot S_2)^{[-1]} = S_3 \cdots S_{p-1} \cdot R$
with non-empty zero-sum sequences $S_i$ for each $i \ge 3$. 
Finally, in both cases we had $|S_1| \ge p$ and $0 \not \in \pi(S_1)$, hence $|\Sigma(\pi(S_1))| = p$ by Lemma~\ref{CD}.
\end{proof}

\section{Reduction of Theorem~\ref{fo2} to Theorem~\ref{beta_H_p}}\label{Sec:reduction}

Our main tool here will be the $k$th Noether number $\beta_k(G,V)$
which is defined for any $k \ge 1$ as the greatest integer $d$ such that
some invariant of degree $d$ exists which is not contained in
the ideal of $\F[V]^G$ generated by the products of at least $k+1$ invariants of positive degree. 
This notion  was introduced in \cite[Section~2]{CzD:1}
with the goal of  estimating the ordinary Noether number from information on its composition factors. This was made possible by \cite[Lemma~1.4]{CzD:3} 
according to which for any normal subgroup $N \triangleleft G$ we have:
\begin{align}
\label{reduction}
\beta(G,V) &\le \beta_{\beta(G/N)}(N,V). 
\end{align}
As observed in \cite[Chapter~5]{CzDG}, if $A$ is an abelian group then $\beta_k(A)$ coincides with   $\D_k(A)$, 
so that we can use \eqref{halter_koch} in the applications of \eqref{reduction}.

\begin{proof}[Proof of Theorem~\ref{fo2} (assuming Theorem~\ref{beta_H_p})]
The ``if'' part follows from  \cite[Proposition~5.1]{schmid} which states that $\beta(C) \le \beta(G)$ for any  subgroup $C \le G$. 
So if $C$ is cyclic of index at most $p$ then $ \beta(G) \ge \beta(C)= |C|=|G|/[G:C] \ge  \frac 1 p |G|$.
Moreover $\beta(C_2^3) =4$ by \eqref{olson_p} and $\beta(H_3) \ge 9$ by Proposition~\ref{H3_felso} below.

The ``only if'' part for $p=2$ follows from \cite[Theorem~1.1]{CzD:3} so for the rest we may assume that $p \ge 3$.
Let $G$ be a  group of order $p^n$ for which  \eqref{fotul} holds.
If $G$ is non-cyclic then it  has a normal subgroup $N \cong C_p \times C_p$ by
\cite[Lemma~1.4]{berk}. We claim that $G/N$ must be cyclic. For otherwise  by applying \cite[Lemma~1.4]{berk}
to the factor group $G/N$ we find a subgroup $K$ such that 
$N \triangleleft K \triangleleft G$  and $K/N \cong C_p \times C_p$.
But then we get using \eqref{reduction} and \eqref{halter_koch} that
\[ \beta(K) \le \beta_{\beta(C_p \times C_p)}(C_p \times C_p) = p(2p-1)+p-1 = 2p^2-1 < p^3=\frac 1 p |K|.\]
As $\beta(G)/|G| \le \beta(K)/|K| $ by \cite[Lemma~1.2]{CzD:3} we get a contradiction with  \eqref{fotul}.

Now let $g \in G$ be such that $gN$ generates $G/N \cong C_{p^{n-2}}$. Then $g^{p^{n-2}} \in N$ has order $p$ or $1$.
In the first case $\bra g \ket$ has index $p$ in $G$ and we are done.
In the other case $\bra g \ket \cap N = \{1\}$ hence $G \cong N \rtimes \bra g \ket$. 
If $g$ acts trivially on $N$ then $G$ contains a subgroup $H \cong C_p \times C_p \times C_p$
for which  we have  $\beta(H) = 3p -2$ by \eqref{olson_p} 
hence $\beta(G)/|G| \le \beta(H)/|H| < 3/p^2 \le 1/p$, as $p\ge 3$, a contradiction. 
This shows that $g$ must act non-trivially on $C_p \times C_p$.
It is well known that $\Aut(C_p \times C_p) = \GL(2,p)$ has order $ (p^2-1)(p^2-p)$,
so its Sylow $p$-subgroup must have order $p$ and it is isomorphic to $C_p$.
Therefore $g^p$ must act trivially on $N$, so if $n \ge 4$ then $g^p \neq 1$ and the subgroup $\langle N, g^p\rangle $ is isomorphic to $ C_p \times C_p \times C_p$, but this  was excluded before. 
The only case which remains open is that $n = 3$ and $G \cong (C_p \times C_p) \rtimes C_p$, 
where the factor group $C_p$ acts non-trivially on $C_p \times C_p$.
This is the Heisenberg group  denoted by  $H_{p}$. 
By Theorem~\ref{beta_H_p} we have $\beta(H_p) < p^2$ for all $p>3$ under our assumption on the characteristic of the base field $\F$. 
So among the Heisenberg groups the inequality \eqref{fotul} can only hold for $H_3$. 
\end{proof}

\begin{remark}
The precise value of the Noether number is already known for all the $p$-groups which satisfy \eqref{fotul}   according to Theorem~\ref{fo2}. 
As the Theorem states,  equality holds in  \eqref{fotul} for $C_2^3$ and $H_3$. 
For the rest, the groups of order $p^n$  which have a cyclic subgroup of index  $p$
were classified  by  Burnside (see e.g. \cite[Theorem~1.2]{berk}) as follows:

(i) if $G$ is abelian, then either $G$ is cyclic with $\beta(G) = p^n$ or  $G = C_{p^{n-1}} \times C_p$ in which case it has $\beta(G) = p^{n-1} +p -1$ by \eqref{olson_p}

(ii) if $G$ is non-abelian and $p > 2$ then $G$ is isomorphic to the modular group $M_{p^n} \cong C_{p^{n-1}} \rtimes C_p$. 
We have  $\beta(M_{p^n}) = p^{n-1} + p-1$ by \cite[Remark~10.4]{CzD:2}. 

(iii) if $G$ is non-abelian and $p=2$ then $G$ is  the dihedral group $D_{2^n}$ or the semi-dihedral group $SD_{2^n}$ or the generalised quaternion group $Q_{2^n}$. 
We have $\beta(Q_{2^n}) = 2^{n-1} +2$ and $\beta(D_{2^n}) = \beta(SD_{2^n}) = 2^{n-1}+1$
by   \cite[Theorem~10.3]{CzD:2}.

Altogether these results imply that  for  any non-cyclic $p$-group $G$  we have \begin{align}\beta(G) \le \frac 1 p |G| + p \end{align}
and this inequality is sharp only for the case  $p= 2$.
\end{remark}

\begin{remark} 
The notion of the Davenport constant $\mathsf D(G)$, originally defined only for abelian groups as in Section~\ref{Sec:zerosum}, was extended to any finite group $G$ in \cite{GeGryn, Gryn}. 
For the conjectural connection between  the Noether number and this generalisation of the Davenport constant  see \cite[Section~5.1]{CzDG} and \cite{CzDSz}.
\end{remark}

\section{Invariant theoretic lemmas}\label{Sec:invariant}

Let us fix here some notations related to invariant rings. For any vector space $V$ over a field $\F$ we denote its coordinate ring by $\F[V]$. 
We say that a group $G$ has a left action on $V$, or that $V$ is a $G$-module, if a group homomorphism $\rho: G \to \GL(V)$ is given
and we abbreviate $\rho(g)(v)$ by writing $g \cdot v$ for any $g \in G$ and $v \in V$. 
By setting $f^g(v) := f(g\cdot v)$ for any $f \in \F[V]$ we obtain a right action of $G$ on $\F[V]$. 
The ring of polynomial invariants is defined as $\F[V]^G := \{f \in \F[V]: f^g =f \; \text{ for all } g \in G \}$.
If the ring $\F[V]^N$ is already known for some normal subgroup $N \triangleleft G$
then $\F[V]^{G}$  as a vector space is spanned by its elements of the form $\tau_N^G(m)$,
where $m$ runs over the set of all monomials and 
$\tau_N^G: \F[V]^N \to \F[V]^{G}$ is the $\F[V]^G$-module epimorphism defined as \[\tau_N^G(m) = \frac 1 {|G/N|} \sum_{g \in G } m^g\] (see e.g. \cite[Chapter 2.2]{NeuselSmith}).
When $N$ is trivial this definition amounts to the Reynolds operator $\tau := \tau_{\{ 1\}}^G$. 
Given any character $\chi \in \widehat{G}:=\Hom(G, \F^{\times})$ the set   $\F[V]^{G,\chi} := \{ f\in \F[V] : f^g = \chi(g) f\}$
constitutes the $\F[V]^G$-module of $G$-semi-invariants of weight $\chi$.
If the restriction of $\chi$ to $N$ is trivial, i.e. when $\chi \in \widehat{G/N}$, then 
these semi-invariants can be obtained by the  projection map $\tau_{\chi}: \F[V]^N \to \F[V]^{G,\chi}$  defined with the analogous formula \[\tau_{\chi}(u) = \frac 1 {|G/N|}\sum_{g \in G/N} \chi^{-1}(g) u^g.\] 

$\F[V]$ and $\F[V]^G$ are  graded rings:  $\F[V]_d$ denotes for any $d \ge 0$  the vector space of degree $d$ homogeneous polynomials  and $\F[V]^G_d = \F[V]^G \cap \F[V]_d$.
The set $\F[V]^G_+ := \bigoplus_{d \ge 1} \F[V]^G_d$ is a maximal ideal in $\F[V]^G$, while $\F[V]^G_+\F[V]$, the ideal of $\F[V]$ generated by all $G$-invariant polynomials of positive degree, 
is the so called \emph{Hilbert-ideal}. This ideal will be  our main object of interest 
since, as  observed in \cite[Section~3]{CzD:1}, the graded factor ring $\F[V]/\F[V]^G_+\F[V]$ is finite dimensional and its top degree, denoted by $b(G,V)$, yields an upper bound on the Noether number
by an easy argument using the Reynolds operator:
\begin{align}\label{b_beta}
\beta(G,V) \le b(G,V) +1.
\end{align}

It is well known that $\beta(G,V)$ is unchanged when we extend the base field 
so we will assume throughout this paper that $\mathbb{F}$ is algebraically closed.

\begin{lemma}\label{b+1} 
Let $G$ be a finite group with a normal subgroup $N$ such that $G/N$ 
is abelian.  Let $W$ be a $G$-module over $\F$ and assume that $|G| \in  \F^{\times}$. 
Then $(\F[W]_+^N)^k  \subseteq \hilb$ for any $k \ge \mathsf D(G/N)$. 
\end{lemma}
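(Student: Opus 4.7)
The plan is to decompose each factor into $G/N$-semi-invariants, so that the statement reduces to a purely combinatorial fact about the group $\widehat{G/N}$ captured by the Davenport constant.

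First, since $G/N$ is abelian and $|G/N|$ is invertible in $\F$, Maschke's theorem applied to the $G/N$-action on $\F[W]^N$ yields a direct sum decomposition
\[ \F[W]^N = \bigoplus_{\chi \in \widehat{G/N}} \F[W]^{G,\chi}, \]
and the projections onto the summands are precisely the operators $\tau_\chi$ introduced just before the lemma (this is the standard orthogonality-of-characters calculation: $\sum_\chi \tau_\chi(f) = f$ for all $f \in \F[W]^N$). In particular, every $f \in \F[W]_+^N$ can be written as a sum of semi-invariants of positive degree, since its constant term vanishes.

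Next, given $f_1,\dots,f_k \in \F[W]_+^N$, expand each $f_i = \sum_{\chi} \tau_\chi(f_i)$ and distribute: by multilinearity it suffices to prove that every product $h_1 \cdots h_k$ with $h_i \in \F[W]^{G,\chi_i}_+$ lies in $\hilb$. Such a product is itself a $G$-semi-invariant of weight $\chi_1 \cdots \chi_k$, so the question becomes whether we can find a non-empty sub-product that is already a genuine $G$-invariant.

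This is where the Davenport constant enters. The sequence $\chi_1 \cdots \chi_k$ over the abelian group $\widehat{G/N}$ has length $k \ge \D(G/N) = \D(\widehat{G/N})$ (using that a finite abelian group and its character group are abstractly isomorphic over an algebraically closed field of admissible characteristic). By definition of $\D$, there exists a non-empty $I \subseteq \{1,\dots,k\}$ with $\prod_{i\in I}\chi_i$ equal to the trivial character. Then $\prod_{i\in I} h_i \in \F[W]^G_+$, and pulling this invariant subproduct out of $h_1 \cdots h_k$ exhibits the full product as an element of $\F[W]^G_+ \cdot \F[W] = \hilb$. The argument contains no real obstacle; the only points to verify carefully are the isotypic decomposition of $\F[W]^N$ and the identification $\D(\widehat{G/N}) = \D(G/N)$, both of which are routine.
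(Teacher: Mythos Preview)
Your proof is correct and follows essentially the same route as the paper: decompose $\F[W]^N$ into $G/N$-isotypic components via the projectors $\tau_\chi$, expand the product, and use $k\ge \mathsf D(G/N)=\mathsf D(\widehat{G/N})$ to find in each term a non-empty subproduct landing in $\F[W]^G_+$. The only cosmetic difference is that you make the use of algebraic closedness (for $\widehat{G/N}\cong G/N$) and Maschke's theorem more explicit than the paper does.
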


\begin{proof}

 $\F[W]^N$ regarded as a $G/N$-module  has the direct sum decomposition $\bigoplus_{\chi \in \widehat{G/N}} \F[W]^{G,\chi}$. 
(Here  we used both our assumptions on $\F$.) 
This means that any element $u \in \F[W]^N_+$ can be written as a sum $u = \sum_{\chi \in \widehat{G/N}} \tau_{\chi}(u)$.
Now  for any  $k \ge 1$ and $u_1,\ldots,u_{k} \in \F[W]_+^N$ we have
\begin{align}\label{trukk:1}
\prod_{i=1}^{k}u_i = 
\prod _{i=1}^{k} \left(\sum_{\chi \in \widehat{G/N}} \tau_{\chi}(u_i) \right) = 
\sum_{\chi_1,\ldots,\chi_{k} \in \widehat{G/N}} \tau_{\chi_1}(u_1) \cdots \tau_{\chi_{k}}(u_{k}).
\end{align}
The term $\tau_{\chi_1}(u_1) \cdots \tau_{\chi_{k}}(u_{k})$ belongs to the ideal $\hilb$
whenever the  sequence $(\chi_1,\ldots,\chi_{k})$  over $\widehat{G/N} \cong G/N$ contains a non-empty zero-sum subsequence.  
But this holds for every term on the right  of \eqref{trukk:1} as $k \ge \mathsf D (G/N)$. 
\end{proof}

\begin{lemma}\label{trukk}
If in  Lemma~\ref{b+1} the factor group $G/N \cong C_p$ is cyclic of prime order then for any $g\in G/N$  
and any elements $u_1,\ldots,u_{p-1} \in \F[W]^N_+$ we have the relation:
\begin{align} \label{trukk:2} u_1 \cdots u_{p-1} - u_1^{g}u_2^{-g}u_3 \cdots u_{p-1} \in \hilb.\end{align}

\end{lemma}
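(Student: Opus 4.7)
The plan is to mimic the argument of Lemma~\ref{b+1}, but track more carefully which terms survive modulo the Hilbert ideal. The key point is that the sequence of characters $(\chi_1,\ldots,\chi_{p-1})$ that labels the surviving terms has length \emph{exactly} $p-1$, so Lemma~\ref{zerosumfree} gives a very rigid description of it.

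First I would decompose each $u_i = \sum_{\chi \in \widehat{G/N}} \tau_\chi(u_i)$ into its isotypic components and expand the product
\[ u_1 \cdots u_{p-1} = \sum_{(\chi_1,\ldots,\chi_{p-1})} \tau_{\chi_1}(u_1)\cdots \tau_{\chi_{p-1}}(u_{p-1}). \]
Exactly as in Lemma~\ref{b+1}, a term vanishes modulo $\hilb$ whenever the sequence $(\chi_1,\ldots,\chi_{p-1})$ over $\widehat{G/N}\cong C_p$ admits a nonempty zero-sum subsequence. Since this sequence has length $p-1$, Lemma~\ref{zerosumfree} ensures that a surviving term must come from the constant sequence $\chi^{[p-1]}$ for some nontrivial character $\chi$. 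Therefore
\[ u_1 \cdots u_{p-1} \equiv \sum_{\chi \neq 1} \tau_\chi(u_1)\tau_\chi(u_2)\cdots \tau_\chi(u_{p-1}) \pmod{\hilb}. \]

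Next I would apply the same analysis to $u_1^g u_2^{-g} u_3 \cdots u_{p-1}$. Since each $\tau_\chi(u_i) \in \F[W]^{G,\chi}$ is a semi-invariant of weight $\chi$, we have $\tau_\chi(u_i)^g = \chi(g)\,\tau_\chi(u_i)$, and because $G/N$ is abelian the exponent $-g$ signifies the action of $g^{-1}$, giving $\tau_\chi(u_2)^{-g} = \chi(g)^{-1}\tau_\chi(u_2)$. The same expansion and the same zero-sum-free conclusion yield
\[ u_1^g u_2^{-g} u_3 \cdots u_{p-1} \equiv \sum_{\chi \neq 1} \chi(g)\chi(g)^{-1}\,\tau_\chi(u_1)\tau_\chi(u_2)\cdots \tau_\chi(u_{p-1}) \pmod{\hilb}. \]
The scalar factor collapses to $1$, so the two reductions coincide and their difference lies in $\hilb$, as required.

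There is no real obstacle here; the only point that could be overlooked is the identification $-g \leftrightarrow g^{-1}$ in the additive/multiplicative conventions for the abelian group $G/N$, and the fact that Lemma~\ref{zerosumfree} applies in its sharp form because the length $p-1$ matches exactly. Everything else is a direct extension of the expansion trick already used in the proof of Lemma~\ref{b+1}.
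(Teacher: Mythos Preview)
Your proof is correct and follows essentially the same route as the paper's: expand each $u_i$ into its $\widehat{G/N}$-isotypic components, use Lemma~\ref{zerosumfree} to identify the surviving terms modulo $\hilb$ as those with constant nontrivial weight sequence $\chi^{[p-1]}$, and then observe that replacing $u_1,u_2$ by $u_1^g,u_2^{-g}$ only introduces the scalar factor $\chi(g)\chi(g)^{-1}=1$. The only cosmetic difference is that the paper phrases the last step via $\tau_\chi(u^g)=\chi(g)\tau_\chi(u)$, whereas you use the equivalent semi-invariant identity $\tau_\chi(u)^g=\chi(g)\tau_\chi(u)$.
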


\begin{proof}
Observe that in \eqref{trukk:1} with $k = p-1$ the weight sequence $(\chi_1,\ldots,\chi_{p-1})$ over $\widehat{C}_p$ is zero-sum free
if and only if  $\chi_1=\ldots=\chi_{p-1}$ and $\chi_1$ is non-trivial (by Lemma~\ref{zerosumfree}). 
As a result we get:
\begin{align*}
u_1\cdots u_{p-1} \in  \sum_{\chi \in \widehat{C}_p \setminus \{ 1 \}} \tau_{\chi}(u_1) \cdots \tau_{\chi}(u_{p-1})
+ \hilb.
\end{align*}
Replacing here $u_1$ and $u_2$ with $u_1^g$ and $u_2^{-g}$, respectively,
and observing that by  definition we have $\tau_{\chi}(u^g) = \chi(g)\tau(u)$ for any $u\in \F[W]^N$ we infer that
$u_1^g u_2^{-g} u_3\cdots u_{p-1}$ must belong to the same residue class modulo the ideal $\hilb$ to which $u_1\cdots u_{p-1}$ does belong. This proves our claim.
\end{proof}

\section{The Heisenberg group $H_p$}\label{Sec:3c}

The Heisenberg group $H_p = \bra a, b \ket$ can be defined  by the presentation:
\begin{align}\label{H_rel}
a^p=b^p=c^p =1 \quad
[a,b] =c \quad
[a,c] =[b,c] =1
\end{align}
where $[a,b]$ denotes the commutator $a^{-1} b^{-1} ab$.
The  subgroups $A:= \bra a,c \ket$ and $B:= \bra b,c \ket$ are normal and isomorphic to $C_p \times C_p$. 
The Frattini-subgroup, the center and the derived subgroup of $H_p$ all coincide with $\bra c\ket$,
so that $H_p$ is extraspecial. 
In particular  $H_p / \bra c \ket$ is also isomorphic to $C_p \times C_p$.
Taking into account only the subgroup  structure of $H_p$ the best upper bound that we can give about its Noether number
 by means of \eqref{reduction} and \eqref{halter_koch} is the following:
\begin{align}\label{apriori} 
\beta(H_p) \le \beta_{\beta(C_p)}(C_p \times C_p) = p^2 + p -1. \end{align}
Our goal in this section will be to enhance this estimate by analysing more closely the invariant rings of  $H_p$.

Let $\F$ be an algebraically closed field with $\ch(\F) \neq p$, so that there is a primitive $p$-th root of unity $\omega \in \F$ that  will be regarded as fixed  throughout this paper.
The irreducible $H_p$-modules over $\F$ are then of two types:
 
(i) Composing any group homomorphism $\rho \in \Hom (C_p \times C_p, \F^{\times} )$ 
with the canonic surjection $H_p \to H_p/\bra c\ket \cong C_p \times C_p$  
yields $p^2$ non-isomorphic $1$-dimensional irreducible representations of $H_p$. 

(ii) 
For each  primitive $p$-th root of unity $\omega^i \in \F$, where $i=1,\ldots,p-1$, take
the induced representation $V_{\omega^i} := \Ind_A^{H_p}\bra v \ket$, 
where $\bra v \ket$ is a $1$-dimensional left $A$-module
such that $a\cdot v =v$ and  $c \cdot v = \omega^i v$.
In the  basis $\{v, b \cdot v, ..., {b^{p-1}} \cdot v \}$ this representation is then given in terms of matrices in the following form,
with $I_p$ the $p \times p$ identity matrix:
\begin{align} \label{H_irred}
 a \mapsto \left(\begin{array}{cccc}1 &   &   &   \\  & \omega^i &   &   \\  &   & \ddots &   \\  &   &   &  \omega^{i(p-1)}\end{array}\right)
\quad
b \mapsto \left(\begin{array}{cccc} 0&  \cdots & \cdots & 1 \\1 &  &  & \vdots \\ & \ddots &  &  \vdots\\ &  & 1 & 0\end{array}\right)
\quad
c \mapsto \omega^i I_p.
\end{align}
Each $V_{\omega^i}$ is irreducible by Mackey's criterion (see e.g. \cite{serre}) and for $\omega^i \neq \omega^{i'}$ it is easily seen
(e.g. from the matrix corresponding to $c$) that $V_{\omega^i}$ and $V_{\omega^{i'}}$ are non-isomorphic as $G$-modules.

Adding the squares of the dimensions of the above irreducible $H_p$-modules we get $p^2\cdot 1 + (p-1)p^2 =p^3= |H_p|$,
so that no other irreducible $H_p$-modules exist. 
As a result 
 an arbitrary $H_p$-module $W$ over  $\F$ has the canonic 
 direct sum decomposition 
\begin{align}\label{direct}
W =  U  \oplus V_1  \oplus \ldots \oplus V_{p-1} 
\end{align}
 where $U$ consists only of $1$-dimensional irreducible representations of $H_p$ with $\bra c \ket$ in their kernel,
while each $V_i$ is an isotypic $H_p$-module consisting of  the direct sum of $n_i \ge 0$ isomorphic copies of the irreducible representation $V_{\omega^i}$:
\begin{align}\label{isotypic}
V_i = \underbrace{V_{\omega^i} \oplus \ldots \oplus V_{\omega^i}}_ {n_i \text{ times}}.
\end{align}

Next we recall how does the action of $G$ on $W$ extend to the coordinate ring $\F[W]$. 
When speaking of a coordinate ring $\F[V_{\omega^i} ] = \F[x_{i,0},...,x_{i,p-1}]$ 
we  always tacitly assume that 
the variables $x_{i,k}$ form a dual basis of the  basis used at \eqref{H_irred}. 
By our convention from Section~\ref{Sec:invariant},  $H_p$ acts  from the right on the variables, i.e. $x^g(v) = x(g \cdot v)$ for all $g\in H_p$, 
so we can rewrite \eqref{H_irred} as:
\begin{align}\label{action}
x_{i,k}^b &= x_{i,(k-1) \mathrm{mod } \, p} \qquad \qquad 
x_{i,k}^a = \omega^{ik} x_{i,k} \qquad \qquad 
x_{i,k}^c = \omega^{i} x_{i,k} .
\end{align}
(Here, by some abuse of notation, we identified the integers $k=0,1,\ldots,p-1$ occurring as indexes with the modulo $p$ residue classes they represent.) 
This shows that the action of the subgroup $A$ on a variable $x_{i,k}$ is completely determined by the modulo $p$ residue classes of the exponents $ik$ and $i$ of $\omega$ in \eqref{action};
we will call  $\phi(x_{i,k}) := (ik , i) \in \Z / p \Z \times \Z / p \Z$ the \emph{weight} of the variable $x_{i,k}$.
We shall also refer to the projections $\phi_a(x_{i,k})= ik $ and $\phi_c(x_{i,k}) =i$.
With this notation it is  immediate from \eqref{action} that  for any $n \in \Z$ and $x=x_{i,k}$ 
\begin{align}\label{action2}
\phi_a(x^{b^n}) = \phi_a(x) - n \, \phi_c(x)
\quad \text{ and } \quad 
\phi_c(x^{b^n}) = \phi_c(x)
\end{align}
where the subtraction and multiplication with $n$ is understood in  $\Z / p \Z$.
This implies the observation, which will be used frequently  later on,  
that for any variable $x$ with $\phi_c(x) \neq 0$ and any arbitrarily given  $w \in \Z / p \Z$ 
there is always an element $g \in \bra b \ket$ such that $\phi_a(x^g) = w$.  
Our discussion also shows that  for a variable $y \in \F[W]$ we have $\phi_c(y) = 0$ if and only if $y \in \F[U]$,
and otherwise the value $\phi_c(y) =i $ determines the isotypic $H_p$-module $V_i$ such that $y \in \F[V_i]$.

Any monomial  $u\in \F[W]$ is  an $A$-eigenvector, too, hence we can associate a weight $\phi(u) := (j,i) \in \Z / p \Z \times \Z / p \Z$ to it
so that  $u^a = \omega^ju$ and $u^c = \omega^i u$. Obviously then $\phi(uv) = \phi(u) + \phi(v)$ for any  monomials $u,v$.
If  $u =  y_1 \cdots y_n$ for some variables $y_i \in \F[W]$, with repetitions allowed, then we can form 
the sequence  $\Phi(u) := \phi(y_1) \cdots \phi(y_n)$ over $A$,
which  will be  called the \emph{weight sequence} of $u$. 
 Obviously  $\phi(u) = \sigma(\Phi(u)) = \phi(y_1) + \cdots+\phi(y_n)$ with the notations of Section~\ref{Sec:zerosum}. 
Observe that a monomial $u$ is $A$-invariant if and only if $\phi(u) =0$, that is if $\Phi(u)$ is a zero-sum sequence over $A$.
Finally, we set  $\Phi_a(u) := (\phi_a(y_1), \ldots, \phi_a(y_n)) $ and $\Phi_c(u) := (\phi_c(y_1), \ldots, \phi_c(y_n)) $.

\begin{definition}
We call two monomials $u,v\in \F[W]$ \emph{homologous},  denoted by $u \sim v$,
if $\deg(u) = \deg(v) = d$ and $u= \prod_{n=1}^d y_n$ while $v = \prod_{n=1}^d y_n^{g_n}$ for  some  variables $y_n \in \F[W]$ (with  repetitions  allowed) and group elements $g_n \in \langle b \rangle$.
\end{definition}

Observe that a monomial $v$   obtained from a monomial $u$ by repeated applications of \eqref{trukk:2}  will be homologous  to it in the above sense. 

\begin{proposition}\label{amorf} 
Let $p \ge 5$. If $u \in \F[W]$ is a monomial with $\deg(u) \ge p^2- 1$, $\mathsf v_0(\Phi_c(u)) \le p$
and  $v \mid u$ is a monomial such that $\deg(v) \le p $ and $0 \not\in \Phi_c(v)$
then for  any homologous monomial $v' \sim v$
there is a homologous monomial $u' \sim u$ such that  $v' \mid u'$ and   $u'-u \in \hilb$.
\end{proposition}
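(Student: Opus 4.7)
The plan is to prove the proposition by induction on $d = \deg(v)$, the base case $d = 0$ being trivial (take $u' := u$). For the inductive step, it is enough to carry out a single-variable shift: given any $y \in v$ with $\phi_c(y) \neq 0$ and any $g \in \langle b \rangle$, construct $u^{(1)} \sim u$ with $u^{(1)} - u \in \hilb$ and $v \cdot y^{[-1]} \cdot y^g \mid u^{(1)}$. Because the homologous relation preserves $\deg$, the multiplicity $\mathsf v_0(\Phi_c(\cdot))$, and the individual weights $\phi_c$ of each variable, one iterates this once per variable of $v$ to obtain the desired $u' \sim u$ with $v' \mid u'$.

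To perform a single-variable shift, set $V := v \cdot y^{[-1]}$, a subsequence of length $\le p-1$ with $0 \notin \Phi_c(V)$. Apply Proposition~\ref{separ} with $T = V$ and projection $\pi = \phi_c$ (matching our hypothesis $\mathsf v_0(\Phi_c(u)) \le p$): one obtains a factorisation $u = U_1 \cdots U_{p-1} \cdot R$ in which each $U_i$ is a nonempty $A$-invariant monomial, $V \mid U_3 \cdots U_{p-1} \cdot R$, and $\Sigma(\phi_c(U_1)) = C_p$. If $y$ already lies in $U_1$ (respectively $U_2$), apply Lemma~\ref{trukk} with shift $h = g$ (respectively $h = -g$): the rewriting $U_1 U_2 \equiv U_1^h U_2^{-h} \pmod{\hilb}$ sends $y$ to $y^g$, preserves $V$ (since $V$ is disjoint from $U_1 \cdot U_2$), and the collateral shifts on the remaining variables of $U_1, U_2$ leave only a homologous monomial, giving the required $u^{(1)}$.

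When instead $y \in U_j$ for some $j \ge 3$ (or $y \in R$), the plan is to rearrange the factorisation by a swap: find a subsequence $Z \mid U_1$ with $\phi(Z) = \phi(y)$ and replace $U_1, U_j$ by $W_1 := (U_1 \cdot Z^{[-1]}) \cdot y$ and $W_j := (U_j \cdot y^{[-1]}) \cdot Z$, keeping the other pieces unchanged. Both $W_1$ and $W_j$ are $A$-invariant since $\phi(Z) = \phi(y)$, and the disjointness $V \cap (W_1 \cup W_2) = \emptyset$ is maintained. Lemma~\ref{trukk} applied to this new factorisation completes the shift as before. The main obstacle is securing a subsequence $Z \mid U_1$ with $\phi(Z) = \phi(y)$: Proposition~\ref{separ} only provides the one-dimensional $\Sigma(\phi_c(U_1)) = C_p$, whereas the swap demands two-dimensional control. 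This is resolved by exploiting the length $|U_1| \ge p$ (with $0 \notin \phi_c(U_1)$) together with the isotypic structure of the $H_p$-module $W$ (each irreducible $V_{\omega^i}$ realises every $\phi_a$-value for fixed $\phi_c = i$); if direct extraction of $Z$ is not immediate, a few preparatory applications of Lemma~\ref{trukk} can be used to diversify the weights of $U_1$ beforehand. This is the delicate step where the precise numerical hypotheses $\deg(u) \ge p^2-1$ and $\mathsf v_0(\Phi_c(u)) \le p$ are crucially invoked.
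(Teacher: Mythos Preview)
Your overall scheme matches the paper's: induction on $\deg(v)$, single-variable shifts, Proposition~\ref{separ} to produce the factorisation $U_1\cdots U_{p-1}\cdot R$ with the protected piece $T$ away from $U_1\cdot U_2$, and Lemma~\ref{trukk} to perform the shift once $y$ sits in $U_1$ or $U_2$. The gap is precisely the case you flag as ``delicate'': when $y$ lies in some $U_j$ with $j\ge 3$ (or in $R$), you assert that a subsequence $Z\mid U_1$ with $\phi(Z)=\phi(y)$ can be found after ``a few preparatory applications of Lemma~\ref{trukk}'' and an appeal to the isotypic structure of $W$. This is not a proof. The isotypic structure tells you which $\phi_a$-values occur among \emph{all} variables of a summand $V_{\omega^i}$, but says nothing about the particular variables that happen to lie in $U_1$; and ``a few applications'' is not a procedure.

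The paper resolves this with a \emph{single} well-chosen preparatory shift, and the mechanism is the $\langle b\rangle$-action on weights, not the module structure. Since $\Sigma(\Phi_c(U_1))=C_p$, there is a divisor $w\mid U_1$ with $\phi_c(w)=-\phi_c(y)\neq 0$. By \eqref{action2} one has $\phi_a(w^{b^n})=\phi_a(w)-n\,\phi_c(w)$, so because $\phi_c(w)\neq 0$ there is $h\in\langle b\rangle$ with $\phi_a(w^{h})=-\phi_a(y)$, i.e.\ $\phi(w^{h})+\phi(y)=0$. One application of Lemma~\ref{trukk} replaces $U_1,U_2$ by $U_1^{h},U_2^{-h}$ modulo $\hilb$; now refactor with new first block $\hat U_1:=w^{h}\cdot y\in\F[W]^A_+$ and move the complement $U_1^{h}/w^{h}$ into $U_j$ (which restores $A$-invariance there since $\phi(U_1^{h}/w^{h})=\phi(y)$). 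The protected piece $V$ remains outside $\hat U_1\cdot \hat U_2$, and since $y$ now sits in $\hat U_1$ you are back in the easy case. Note also that the hypotheses $\deg(u)\ge p^2-1$ and $\mathsf v_0(\Phi_c(u))\le p$ are consumed entirely by Proposition~\ref{separ}; the two-dimensional weight control comes from the $b$-action alone.
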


\begin{proof}
We use induction on the degree $d:= \deg(v) = \deg(v')$.
If $d =0$ then  $v=v'=1$, so we are done by taking $u'=u$.
Suppose now that the claim holds for some $d \le  p-1$. 
It suffices to prove that for any given divisor $xv \mid u$, where $x$ is a variable, $\deg(v) = d$,   $0 \not\in \Phi_c(xv)$, and for  any    $v'\sim v$ and $g \in \langle b \rangle$ 
a monomial $u'' \sim u$ exists such that $x^gv' \mid u''$ and $u'' - u \in \hilb$.

By the inductive hypothesis we already have a monomial $u' \sim u$ such that $v' \mid u'$ and   $u'-u \in \hilb$.
As $u'/v' \sim u/v$ and $x$ divides $ u/v$ there is a $t \in \langle b \rangle$ such that $x^t$ divides $u'/v'$.
By applying  Proposition~\ref{separ} to the weight sequences $S := \Phi(u')$, $T := \Phi( v')$
we obtain  a factorisation $u' =u_1 \cdots u_{p-1}u_p$ such that $u_i \in \F[W]_+^A$ for all $i=1,\ldots, p-1$, $u_p \in \F[W]$,   $v'$ divides $u'/u_1u_2$ and $\Sigma(\Phi_c(u_1)) = \Z / p \Z$.
We have two cases:

i) If $x^t \mid u_1$  (or similarly if $x^t \mid u_2$) then take $u'' := u_1^{-t+g}u_2^{t-g}u_3 \cdots u_{p-1}u_p$. 
We have $x^gv' \mid u''$ and $u'' \sim u' \sim u$, while  $u''-u' \in \hilb$ by Lemma~\ref{trukk}. 

ii) Otherwise $x^t \mid u_k$ for some $k>2$. By our assumption on $\Sigma(\Phi_c(u_1))$  there is a divisor $w \mid u_1$ with $\phi_c(w) =  -\phi_c(x^t) $.
As
$\phi_c(x^t) = \phi_c(x) \neq 0$
 there is an $h \in \langle b \rangle$ for which $\phi_a(w^h) = -\phi_a(x^t)$. 
Then for  $\hat u := u_1^hu_2^{-h} u_3 \cdots u_{p-1}r$ we have $\hat u \sim u$ and $\hat u -u \in \hilb$ by Lemma~\ref{trukk}.
Take the factorisation $\hat u = \hat u_1 \cdots  \hat u_p$ where
$\hat u_1 = w^hx^t$,
$\hat u_2 = u_2^{-h}$,
$\hat u_k = (u_k/x^t) (u_1^h/w^h)$
and $\hat u_i = u_i$ for the rest. 
By construction $\hat u _i \in \F[W]^A_+$ for all $i \le p-1$, $v'$ divides $\hat u / \hat u_1 \hat u_2$  and $x^t \mid \hat u_1$, 
so this factorisation of $\hat u$   falls under case i) and we are done. 
\end{proof}

We  need some further notations. 
The  decomposition \eqref{direct} induces an 
 isomorphism $\F[W] \cong  \F[U] \otimes \F[V_1] \otimes \ldots \otimes \F[V_{p-1}] $ 
 which in turn yields for any monomial $m \in \F[W]$ a  factorisation 
 $m=m_0 m_1 \cdots m_{p-1}$ such that $m_0 \in \F[U]$ and $m_i \in \F[V_i]$ for all $i$.
Then  for each $i$ the  decomposition \eqref{isotypic} gives  
the identifications $\F[V_i] = \bigotimes_{j=1}^{n_i} \F[V_{\omega^i}]=\F[x_{i,k}^{(j)}: k=0,\ldots,p-1; j=1,\ldots,n_i]$,
where we set $x_{i,k}^{(j)} := 1 \otimes \cdots \otimes x_{i,k}\otimes \cdots \otimes 1$, i.e. the variable $x_{i,k}$ introduced at \eqref{action} is placed in the $j$th tensor factor.
So for any monomial $m_i \in \F[V_i]$ we have a  factorisation
 $m_i = m_i^{(1)} \cdots m_i^{(n_i)}$ where each monomial $m_i^{(j)}$ depends only on the set of variables $\{x_{i,k}^{(j)}$: $k = 0,1,\ldots, p-1\}$.
 Observe finally that  two monomials $u,v \in \F[V_1 \oplus \ldots \oplus V_{p-1}]$ are homologous, $u \sim v$, if and only if  $\deg(u_{i}^{(j)}) = \deg(v_{i}^{(j)})$ for all $i=1,\ldots,p-1$ and $j=1,\ldots, n_i$. 
 
We shall also need the polarisation operators  defined for any polynomial $f \in \F[W]$ by the formula
\begin{align}
\Delta_i^{s,t}(f) := \sum_{k=0}^{p-1}  x^{(t)} _{i,k}\partial^{(s)}_{i,k}f
\end{align} 
where $\partial^{(s)}_{i,k}$ denotes partial derivation with respect to  the variable $x_{i,k}^{(s)}$.
All  polarisation operations $\Delta:=\Delta^{s,t}_i$ are degree preserving, $\deg(\Delta(f) ) = \deg(f)$, and $G$-equivariant, i.e. $\Delta(f^g) = \Delta(f)^g$. 
Therefore by the Leibniz rule
\begin{equation}\label{Leibniz}
\begin{aligned}
\Delta(\hilb)  & \subseteq \hilb \quad \text{ and }  \\ \Delta(\Hilb) & \subseteq \Hilb.
\end{aligned}
\end{equation}

\begin{proposition}\label{majdnem_tiszta}
Let $p \ge 5$ and assume that $\mathrm{char}(\F)$ is $0$ or greater than $p$. 
 If a monomial $m \in \F[W]$   has $\deg(m) \ge p^2-1$  then  $m \in \hilb$. 
\end{proposition}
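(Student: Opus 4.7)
My plan is to split the argument into two cases based on the value of $\mathsf{v}_0(\Phi_c(m))$.

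\textbf{Case 1} ($\mathsf{v}_0(\Phi_c(m)) \ge p+1$). I would apply Lemma~\ref{nullak} to the $C_p$-sequence $\Phi_c(m)$ of length at least $p^2-1$: it produces a factorisation $\Phi_c(m) = T_1 \cdots T_\ell \cdot R_0$ with $\ell \ge 2p-1$ non-empty zero-sum subsequences. Lifting each $T_i$ back to monomials gives $m = u_1 u_2 \cdots u_\ell \cdot r_0$ with each $u_i \in \F[W]^{\bra c\ket}_+$. Since $H_p/\bra c\ket \cong C_p \times C_p$ has Davenport constant $2p-1$ by \eqref{olson_p}, Lemma~\ref{b+1} applied with $N = \bra c\ket$ yields $(\F[W]^{\bra c\ket}_+)^{2p-1} \subseteq \hilb$, and therefore $m \in \hilb$.

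\textbf{Case 2} ($\mathsf{v}_0(\Phi_c(m)) \le p$). I would apply Proposition~\ref{separ} with $T=\emptyset$ to factor $m = M\cdot r$, where $M := m_1 m_2 \cdots m_{p-1} \in \F[W]^A_+$ is a product of $p-1$ $A$-invariants and $\Sigma(\Phi_c(m_1)) = C_p$. If $r \in \F[W]^A_+$ then $m \in (\F[W]^A_+)^p \subseteq \hilb$ directly from Lemma~\ref{b+1} with $N = A$ (using $\D(C_p) = p$).

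In the remaining subcase, the key observation is that $\tau_A^G(M) \in \F[W]^G$, so $\sum_{k=0}^{p-1} M^{b^k}\cdot r = p\,\tau_A^G(M)\cdot r$ lies in $\hilb$ as soon as $\tau_A^G(M) \ne 0$. Each summand $M^{b^k} r$ is homologous to $m = Mr$, being obtained by applying $b^k$ to every variable of $M$ and leaving those of $r$ fixed. By iterating Proposition~\ref{amorf} in chunks of at most $p$ non-$U$ variables at a time, I expect to establish $M^{b^k} r \equiv m \pmod{\hilb}$ for every $k$; summing then gives $pm \in \hilb$, and $m \in \hilb$ follows since $p$ is invertible in $\F$. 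The degenerate configuration where $\tau_A^G(M) = 0$---equivalently, $M$ is a nontrivial $b$-semi-invariant, which forces all $m_i$ to share the same character of $G/A$---can be bypassed beforehand by using Lemma~\ref{trukk} to shuffle $b$-powers between two factors $m_i, m_j$ of differing characters (an identification which itself becomes unavoidable once one shuffles via Lemma~\ref{trukk}).

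The main obstacle is the iterative use of Proposition~\ref{amorf} in Case 2: each single application updates only $b$-conjugates of at most $p$ variables with $\phi_c \ne 0$ and leaves the rest of the monomial in a form dictated by the Proposition's proof rather than freely chosen. Sequencing these moves so as to genuinely reach each sibling $M^{b^k} r$ modulo $\hilb$, while separately accounting for the at most $p$ $U$-variables (on which the $b$-action is purely scalar) and confirming that the degenerate $b$-semi-invariant configuration is avoidable, constitutes the technical heart of the argument.
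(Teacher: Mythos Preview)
Your Case~1 is correct and coincides with the paper's argument. The gap is in Case~2: the iterative use of Proposition~\ref{amorf} does not work as you describe, and you essentially concede this yourself. Each application of Proposition~\ref{amorf} lets you prescribe a homologous replacement $v'$ for a chunk $v$ of at most $p$ variables, but the remaining variables of the resulting $u'$ are only guaranteed to be \emph{some} homologous $b$-shifts, not the ones you want. A second application, aimed at fixing the next chunk, will in general scramble the chunk you just fixed. There is no evident induction that lets you accumulate control over all of $M$ (whose degree can be close to $p^2$) while simultaneously holding $r$ fixed, so the averaging identity $\sum_k M^{b^k} r = p\,\tau_A^G(M)\, r$ is not reachable modulo $\hilb$ by this route. (Incidentally, your ``degenerate configuration'' $\tau_A^G(M)=0$ cannot occur: $M$ is a monomial, so $\tau_A^G(M)$ is a sum of $p$ monomials with equal nonzero coefficients, hence nonzero.)

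The paper's proof of Case~2 avoids the iteration entirely by a different idea: it does not try to reach a whole $b$-orbit, but instead uses a \emph{single} application of Proposition~\ref{amorf} to insert a $G$-invariant factor. The point is that within one copy of $V_{\omega^i}$ the degree-$p$ monomial $\prod_{g\in\langle b\rangle} x^g$ is already $G$-invariant; so if some isotypic piece $m_i^{(j)}$ has degree $\ge p$, one replaces a degree-$p$ subdivisor of it by this invariant and gets $m'\in\hilb$ immediately. When no single $m_i^{(j)}$ has degree $\ge p$, the paper uses the $G$-equivariant polarisation operators $\Delta_i^{s,t}$ (which preserve $\hilb$ by \eqref{Leibniz}) together with a downward induction on $\mu(m)=\max_j \deg(m_1^{(j)})$ to reduce to the previous situation. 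Neither of these two ingredients---the explicit $G$-invariant monomial of degree $p$ and the polarisation reduction---appears in your sketch, and they are precisely what makes Case~2 go through with only one, not many, invocations of Proposition~\ref{amorf}.
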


\begin{proof}
Consider the factorisation $m= m_0 m_1 \cdots m_{p-1} $ derived from \eqref{direct} as described above.
Observe that for the weight sequence $S = \Phi(m)$ we have $\mathsf v_0(\Phi_c(m)) = \deg(m_0)$. 
So if  $\deg(m_0) \ge p+1$ then $m \in (\F[W]^{\langle c \rangle}_+)^{2p-1}\F[W]$ by Lemma~\ref{nullak} 
and we are done, as $\mathsf D(G/\langle c \rangle) = \mathsf D (C_p \times C_p) = 2p-1$ by \eqref{olson_p} hence $(\F[W]^{\langle c \rangle}_+)^{2p-1} \subseteq \hilb$ by Lemma~\ref{b+1}.

It remains that $\deg(m_0) \le p$.
Then  we must have $\deg(m_i) \ge p$ for some  $i\ge 1$, say $i=1$, 
as otherwise $\deg(m) \le \deg(m_0) +(p-1)^2 \le p^2-p+1$ would follow.
Take the factorisation $m_1 = m_{1}^{(1)} \cdots m_{1}^{(n_1)}$  corresponding to the direct decomposition \eqref{isotypic}. 
We  proceed by induction on   $\mu(m) := \max_{j=1}^{n_1} \deg(m_{1}^{(j)})$.

Assume first that $\mu(m) \ge p$. 
This means that $\deg(m_{1}^{(j)}) \ge p$ for some  $j$, say $j=1$. 
 Now let $v$  be an arbitrary divisor of $ m_{1}^{(1)}$ with degree $\deg(v) = p$ and 
 let $v' = \prod_{g \in \langle b \rangle} x^g$ for some variable $x \in \F[V_{\omega^1}^{(1)}]$.
 Then $v'$ is $b$-invariant by construction.
Moreover by \eqref{action2} we have $\phi_c(v') = p\phi_c(x) =0$ and $\phi_a(v') = p\phi_a(x) - (1+2+\cdots+p-1)\phi_c(x) =0$,
and consequently $v'$ is $G$-invariant.
Now as  $v' \sim v$, we can find by Proposition~\ref{amorf} a monomial $m' \sim m$ such that $m-m' \in \hilb$ and $v' \mid m'$. 
 But then $m' \in \hilb$ and we are done for this case.

Now let $\mu(m) < p$.  As $\deg(m_1)\ge p$,  
 we can take a divisor $v \mid m_1$ such that $v = v^{(i)} v^{(j)}$ for some indices $i \neq j \leq n_1$ where we have  $\deg(v^{(i)}) = \mu(m)$ and $\deg(v^{(j)} )  =1$. 
Then the monomial $v' := (x^{(i)}_{1,1} )^{\mu(m)}x_{1,1}^{(j)}$ is homologous with this $v$
and consequently, by Proposition~\ref{amorf},  a monomial $m' \in \F[W]$ exists such that $m-m' \in \hilb$ and $v' \mid m'$.
Our claim  will now follow  by proving that $m' \in \hilb$.

To this end observe that for the monomial $\tilde m := x_{1,1}^{(i)}m'/x_{1,1}^{(j)}$  we have  $\mu(\tilde m) = \mu(m) +1$,  hence  by the induction hypothesis $\tilde m \in \hilb$ already holds.
Moreover $ \Delta_1^{i,j}(\tilde m) = (\mu(m)+1) m'$  by construction, 
hence $(\mu(m) +1)m ' \in \hilb$ by \eqref{Leibniz} and we are finished because by our assumption on  $\F$
 we are allowed to divide by $\mu(m)+1 \le p < \mathrm{char} (\F )$. 
\end{proof}

\begin{proof}[Proof of Theorem~\ref{beta_H_p}]
From Proposition~\ref{majdnem_tiszta}  we see that 
$\F[W]$ as a module over $\F[W]^G$ is generated by elements of degree at most $p^2 -2$. 
Equivalently, for the top degree in the factor ring $\F[W]/\F[W]_+^G\F[W]$ we have the estimate $b(G,W) \le p^2-2$,
whence by \eqref{b_beta} we conclude that $\beta(G,W)\le p^2-1$. 
\end{proof}

\section{The case p=3}\label{Sec:4}

\begin{proposition}\label{H3_also}
Consider  $V=V_{\omega}$   for a primitive third root of unity  $\omega \in \F$ 
as  given by \eqref{H_irred}. 
Then $\beta(H_3, V)\ge 9$.
\end{proposition}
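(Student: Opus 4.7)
The plan is to exhibit an explicit $H_3$-invariant of degree $9$ that does not lie in $(\F[V]^{H_3}_+)^2$, whence $\beta(H_3, V) \ge 9$ follows at once, since a minimal homogeneous generating set of $\F[V]^{H_3}$ must contain an element in every degree $d$ for which the quotient $\F[V]^{H_3}_d / [(\F[V]^{H_3}_+)^2]_d$ is non-zero. Writing $x_k$ for $x_{1,k}$, the candidate is the Vandermonde in the cubes
\[ \Psi := (x_0^3 - x_1^3)(x_1^3 - x_2^3)(x_0^3 - x_2^3) \in \F[V]_9. \]
That $\Psi \in \F[V]^{H_3}$ follows from \eqref{action}: each $x_k^3$ is fixed by $a$ and by $c$, so every polynomial in $x_0^3, x_1^3, x_2^3$ is $\bra a, c \ket$-invariant, and the action of $b$ cyclically permutes these three cubes. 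Since a cyclic permutation of three elements is an even permutation, the alternating product $\Psi$ is also $b$-invariant.

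To show $\Psi \notin (\F[V]^{H_3}_+)^2$, I would introduce the algebra involution $\sigma$ of $\F[V]$ defined by $\sigma(x_0) = x_0$, $\sigma(x_1) = x_2$, $\sigma(x_2) = x_1$. The underlying linear map $v_k \mapsto v_{-k}$ of $V$ normalises the image of $H_3$ in $\GL(V)$ through the group automorphism $a \mapsto a^{-1}$, $b \mapsto b^{-1}$, $c \mapsto c$ (a one-line check for each generator), and therefore $\sigma$ restricts to an involution of $\F[V]^{H_3}$. Since $\sigma$ interchanges the outer factors of $\Psi$ and negates its middle factor, $\sigma(\Psi) = -\Psi$.

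The final ingredient is that every $H_3$-invariant of degree $3$ or $6$ is $\sigma$-fixed. Molien's formula (or a direct enumeration of $\bra a, c\ket$-invariant monomials up to the cyclic action of $b$) gives $\dim \F[V]^{H_3}_3 = 2$ and $\dim \F[V]^{H_3}_6 = 4$, with bases
\[ \F[V]^{H_3}_3 = \bra\, x_0 x_1 x_2,\ x_0^3 + x_1^3 + x_2^3\, \ket, \]
\[ \F[V]^{H_3}_6 = \bra\, (x_0 x_1 x_2)^2,\ (x_0 x_1 x_2)(x_0^3+x_1^3+x_2^3),\ x_0^6+x_1^6+x_2^6,\ x_0^3 x_1^3 + x_0^3 x_2^3 + x_1^3 x_2^3\, \ket, \]
all of which are manifestly symmetric in $x_1, x_2$ and hence $\sigma$-fixed. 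Because $c$ acts by the scalar $\omega$, $H_3$-invariants on $V$ occur only in degrees divisible by $3$, so $(\F[V]^{H_3}_+)^2_9 = \F[V]^{H_3}_3 \cdot \F[V]^{H_3}_6$ is contained in the $\sigma$-fixed subspace. Since $\ch(\F) \neq 2$, the non-zero element $\Psi$ with $\sigma(\Psi) = -\Psi$ cannot lie there, completing the argument.

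The main obstacle is the degree-$6$ dimension count: one must be confident that the four listed invariants really span $\F[V]^{H_3}_6$. This is transparent via Molien's formula, or bare-handedly by listing exponent vectors $(e_0, e_1, e_2)$ with $e_0+e_1+e_2 = 6$ satisfying the $A$-invariance condition $e_1 + 2 e_2 \equiv 0 \pmod 3$ and grouping them into cyclic $\bra b\ket$-orbits; the other steps amount to direct verifications.
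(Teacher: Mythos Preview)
Your argument is correct and rests on the same observation as the paper's: every $H_3$-invariant in degree $\le 8$ is a symmetric polynomial in $x_0,x_1,x_2$, so a degree-$9$ invariant that breaks this symmetry must be indecomposable. The paper phrases this as $R:=\F[x_0x_1x_2,\ \tau(x_0^3),\ \tau(x_0^3x_1^3)]\subseteq\F[V]^{S_3}$ and exhibits $\tau(x_0^6x_1^3)\notin\F[V]^{S_3}$; you instead use the single transposition $\sigma$ and the Vandermonde $\Psi$ with $\sigma(\Psi)=-\Psi$. The one substantive difference is that your version needs $\ch(\F)\neq 2$ to separate the $\sigma$-eigenspaces, whereas the paper's witness $\tau(x_0^6x_1^3)$ has a monomial support visibly not stable under $S_3$ and so works in characteristic~$2$ as well---which matters for Corollary~\ref{H3}. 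If you want to cover that case too, simply replace $\Psi$ by $\tau(x_0^6x_1^3)$ (or equivalently observe that your degree-$6$ basis is in fact fully $S_3$-symmetric, not just $\sigma$-fixed) and the rest of your argument goes through unchanged.
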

\begin{proof}
Let $\F[V] = \F[x,y,z]$ with the variables conforming our conventions. 
$\F[V]^{H_3}$ is spanned by the elements  $\tau(m):= \tau_A^{H_3}(m)=\frac 1 3(m+m^b + m^{b^2})$ where $m$ is any $A$-invariant monomial. An easy argument shows that $xyz, x^3, y^3, z^3$ are the only irreducible $A$-invariant monomials. 
Then by enumerating all $A$-invariant monomials of degree  at most $8$ we see  that 
they have  degree  $3$ or $6$ so that for $d \le 8$ we have
$\F[V]^{H_3}_d = R_d$, 
where $ R := \F[xyz, \tau(x^3), \tau(x^3y^3)]$. 
Now if we assume  that $\beta(H_3,V) \le 8$ then $\F[V]^{H_3} = R$ follows. 
Observe however that all the  generators of $R$ are symmetric polynomials, so that $R \subseteq \F[V]^{S_3}$. 
 On the other hand   $\tau(x^6y^3) \in \F[V]^{H_3}$ is not a symmetric polynomial, whence  $\tau(x^6y^3) \not\in R.$ 
 This is a contradiction which proves that  $\beta(H_3,V ) \ge 9$. 
\end{proof}

The upper bound on $\beta(H_3)$ will be obtained by an argument very similar to Propositions \ref{separ}, \ref{amorf} and  \ref{majdnem_tiszta}, 
but since there are  many different details, too, we preferred to give a self-contained treatment of this case here:

\begin{proposition} \label{H3_felso}
If $\mathrm{char}(\F) \neq 3$ then $\beta(H_3) \le 9$.
\end{proposition}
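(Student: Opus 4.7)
The plan is to mimic the template of Propositions~\ref{amorf} and~\ref{majdnem_tiszta} in the simpler setting $p = 3$. I aim to show that every monomial $m \in \F[W]$ of degree $\ge 9$ lies in $\hilb$ for an arbitrary $H_3$-module $W$; combined with the bound $\beta(G,W) \le b(G,W) + 1$ from \eqref{b_beta} this gives $\beta(H_3) \le 9$. Decompose $W = U \oplus V_1 \oplus V_2$ as in \eqref{direct} and factor $m = m_0 m_1 m_2$ correspondingly.

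The first branch copies Proposition~\ref{majdnem_tiszta} verbatim: if $\deg(m_0) \ge 4 = p+1$, then since $\deg(m) \ge 9 > p^2 - 1$ and $\mathsf v_0(\Phi_c(m)) = \deg(m_0) \ge p+1$, Lemma~\ref{nullak} writes $m$ as a product of $\ell \ge 2p-1 = 5$ non-empty $\bra c \ket$-invariants times a remainder, and since $\mathsf D(H_3/\bra c \ket) = \mathsf D(C_3 \times C_3) = 5$ by \eqref{olson_p}, Lemma~\ref{b+1} forces $m \in \hilb$.

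The substantive case is $\deg(m_0) \le 3$, which implies $\deg(m_1) + \deg(m_2) \ge 6$; by the symmetry between $V_1$ and $V_2$ one may assume $\deg(m_1) \ge 3$. Here I would first establish a $p=3$ analog of Proposition~\ref{separ}: for any length $\ge 9$ weight sequence $S$ over $A \cong C_3 \times C_3$ with $\mathsf v_0(\pi(S)) \le p = 3$ and any sub-sequence $T \mid S$ of length $\le p - 1 = 2$, extract a factorisation $S = S_1 \cdot S_2 \cdot R$ with both $S_i$ non-empty zero-sum, $T \mid S \cdot (S_1 \cdot S_2)^{[-1]}$, and $\Sigma(\pi(S_1)) = C_3$. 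Combining this with Lemma~\ref{trukk} (for $p - 1 = 2$ factors) yields a $p=3$ analog of Proposition~\ref{amorf}: any monomial in the relevant range can be swapped modulo $\hilb$ for a homologous one containing a specified homologous sub-monomial. Once this machinery is in place, the proof of Proposition~\ref{majdnem_tiszta} carries over with the usual dichotomy on $\mu(m_1) := \max_j \deg(m_1^{(j)})$: if $\mu(m_1) \ge 3$, replace a degree-$3$ divisor of some $m_1^{(j)}$ by the $G$-invariant $v' = x \cdot x^b \cdot x^{b^2}$ (invariance verified via \eqref{action2}); if $\mu(m_1) < 3$, apply a polarisation operator $\Delta_1^{s,t}$, whose scalar factor $\mu(m_1) + 1 \le 3$ is invertible thanks to $\ch(\F) \neq 3$, and recurse on $\mu$.

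The main obstacle is the $p = 3$ version of Proposition~\ref{separ}. The argument for \ref{separ} genuinely needed $p \ge 5$: it invoked the $\eta$-bound of Lemma~\ref{eta} (with threshold $3p-2$) on a sub-sequence of length $\ge |S| - 2p + 1 \ge 3p$, whereas for $p = 3$ and $|S| = 9$ one only obtains $|S^*| \ge 4$, far below $3p - 2 = 7$. I expect to sidestep this by arguing combinatorially over $C_3 \times C_3$: use $\mathsf D(C_3 \times C_3) = 5$ and Lemma~\ref{HK} to peel off two non-empty zero-sums from a length-$9$ sequence with controlled remainder, and invoke Lemma~\ref{CD} to secure $\Sigma(\pi(S_1)) = C_3$ as soon as $|S_1| \ge 3$ and $0 \notin \pi(S_1)$. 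The bookkeeping splits into sub-cases according to the number (at most three) of zeros of $\pi(S)$, whether $T$ has any zero $\pi$-entry, and the length of the first extracted zero-sum (which must lie between $2$ and $5$). This combinatorial analysis of short zero-sum sequences in $C_3 \times C_3$ is precisely the ``different details'' the authors allude to before the statement.
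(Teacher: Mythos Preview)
Your outline follows the paper's general strategy, but there is a genuine gap at the very last step: the polarisation recursion does not bottom out in characteristic~$2$. In the case $\mu(m_1)=1$ the scalar you divide by is $\mu(m_1)+1=2$, and the hypothesis of Proposition~\ref{H3_felso} is only $\mathrm{char}(\F)\neq 3$; characteristic~$2$ is allowed. So your claim that ``$\mu(m_1)+1\le 3$ is invertible thanks to $\mathrm{char}(\F)\neq 3$'' is simply wrong when $\mu(m_1)=1$ and $\mathrm{char}(\F)=2$. This is not a technicality you can patch by the same mechanism: with all $\deg(m_1^{(j)})=1$ there is no single copy in which you can accumulate a degree-$3$ block by a single polarisation, and every polarisation you write down will carry a coefficient divisible by~$2$.

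The paper circumvents this by two changes relative to your plan. First, it argues by contradiction from $\beta(H_3,W)\ge 10$, working with $A$-invariant monomials $m$ of degree $\ge 10$ modulo the ideal $\F[W]^G_+\F[W]_+$ rather than showing all degree-$\ge 9$ monomials lie in $\hilb$; the extra degree and the $A$-invariance give a cleaner count (in particular $d_1\ge 5$, not merely $\ge 3$) which smooths the analogue of Proposition~\ref{separ}. Second, and decisively for the $\mu=1$ case, it replaces the naive polarisation by the hand-crafted identity
\[
\Delta^{1,2}(x_1y_1z_3)+\Delta^{2,3}(x_1y_2z_2)+\Delta^{3,1}(x_3y_2z_3)=3\,x_1y_2z_3+\tau(x_3y_2z_1),
\]
which, after multiplying by $m/x_1y_2z_3$, expresses $3m$ as a sum of elements already known to lie in the ideal. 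The only scalar to invert is $3$, so the argument goes through for every $\mathrm{char}(\F)\neq 3$. Your proposal is missing precisely this device (or an equivalent one), and without it the case $\mu(m_1)=1$, $\mathrm{char}(\F)=2$ remains open.
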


\begin{proof}
Suppose  that $\beta(H_3, W) \ge 10$ holds for a $H_3$-module $W$.  
Then there is a monomial $m \in \F[W]^A$ with $\deg(m) \ge 10$ such that $m \not\in \Hilb$
(as otherwise for any $d \ge 10$ the space $\F[W]^G_d$ spanned by the elements $\tau(m)$ would be contained in  $(\F[W]_+^G)^2$).
Let $S = \Phi_c(m)$, identify $\bra c \ket$ with $\mathbb Z / 3 \mathbb Z$  and let $d_i = \mathsf v_i(S)$ for $i\in \mathbb Z/ 3 \mathbb Z =\{0,1,2\}$. 
Recall that we have the factorisation $m=m_0m_1m_2$ corresponding to the direct decomposition $W = U \oplus V_1 \oplus V_2$,
so that $\deg(m_i) = d_i $ for $i=0,1,2$. 
We may assume by symmetry that $d_1 \ge d_2$.

{\bf A.} {\it We claim that  $d_1 \ge 5$.}

 $S$ is a zero-sum sequence over $\mathbb Z / 3 \mathbb Z$ 
and this is only possible if $d_1 - d_2 \equiv 0 \mod{3}$.
So let $d_1 - d_2 = 3k$ for some integer $k \ge 0$. 
Denoting by $\ell(S)$ the maximum number of non-empty zero-sum sequences into which $S$ can be factored,
we have $\ell(S) = d_0 +d_2 +k \le 5$, as otherwise by Lemma~\ref{b+1} applied with $N = \langle c \rangle$ we get
$m \in (\F[W]_+^{\langle c\rangle})^6 \subseteq \Hilb$, since $H_3/N \cong C_3 \times C_3$ and $\D(C_3^2) = 5$ by \eqref{olson_p}. 
On the other hand 
$ |S| = d_0 +d_1+d_2 \ge 10$. 
Subtracting from this inequality  the previous one yields $ d_1 -k \ge 5$, 
whence the claim.

{\bf B.} {\it For any  $w \mid m_1$ with $\deg(w) \le 2$ 
 there is a factorisation $m=u_1u_2u_3$ with $u_i \in \F[W]^A_+$ such that $w \mid u_3$ and $y \mid u_1$ for some variable $y \mid m_1$. }

As $\deg(m/w) \ge  8 = \D_2(C_3^2)$ there is  a factorisation $m/w= u_1u_2r$ with $u_1,u_2 \in \F[W]^A_+$. 
Setting $u_3 = rw$ enforces  $u_3 \in \F[W]^A_+$. 
Here $\deg(u_3) \le \mathsf D(A)=5$ as otherwise $u_3 \in (\F[W]^A_+)^2$ and $m \in (\F[W]^A_+)^4 \subseteq \Hilb$ by Lemma~\ref{b+1}, a contradiction. 
Therefore we cannot have $m_1 \mid u_3$,  for then by {\bf A.} we have $5 \le \deg(m_1) \le \deg(u_3) \le 5$, so that $m_1 = u_3$ 
and  $\Phi_c(m_1) = 1^{[5]}$, contradicting the assumption that $\Phi(u_3)$ is a zero-sum sequence over $A$.
As a result there is a variable $y \mid m_1$ not dividing $u_3$, whence the claim.

{\bf C.}
{\it For any divisor   $v \mid m_1$  with $\deg(v) \le 3$ and  any  monomial $v' \sim v$
there is a monomial $m' \in \F[W]$ such that $v' \mid m'$ and $m -m' \in \Hilb$.}

Let $v= xw$ and $v'=x^gw'$ where $\deg(w) \le 2$, $w' \sim w$ and $g\in  \langle b \rangle$.
By induction on  $ \deg(v) $
assume that we already have a monomial $m'' \sim m$ such that $m''-m \in \Hilb$ and $x^tw' \mid m''$ for some $t \in \langle b \rangle$. 
According to {\bf B.} there are factorisations  $m'' = u_1u_2u_3$ with  $u_i \in \F[W]^A_+$ such that $w'\mid u_3$ and $y \mid u_1$ for some variable $y \in \F[V_1]$. 
We have two cases: i) If we can take $y = x^t$ in one these factorisations then
 for $m' := u_1^{-t+g}u_2^{t-g} u_3 \sim m''$ we have $m'-m'' \in \Hilb$ by Lemma~\ref{trukk} so  we are done as $v' \mid m'$.
ii) Otherwise necessarily  $x^tw' \mid u_3$  and $y \neq x^t$. Still however, there is an  $h \in \langle b \rangle$ such that  $\phi(y^h) = \phi(x^t)$
hence  for $\tilde m := u_1^hu_2^{-h}u_3 \sim m''$ we have $m-\tilde m \in \Hilb$ by Lemma~\ref{trukk} 
and we obtain a factorisation $\tilde m = \tilde u_1 \tilde u_2 \tilde u_3$ falling under case i) by setting $\tilde u_1 = x^tu_1^h/y^h$, $\tilde u_2 = u_2^{-h}$, $\tilde u_3 = y^hu_3/x^t$, so we are done again.

{\bf D.} Now we proceed as in the proof of Proposition~\ref{majdnem_tiszta}. 
For the sake of simplicity from now on we  rename  our variables so that $\F[V_1] = \bigotimes_{i=1}^{n_1} \F[V_{\omega}]= \F[x_i,y_i,z_i: i=1,\ldots, n_1]$. 
Moreover we abbreviate $\Delta_1^{s,t}$ as $\Delta^{s,t}$.

1) If we have $\deg(m_{1}^{(i)}) \ge 3$ for some $1 \le i \le n_1$ then
we can apply {\bf C.} with $v' := x_iy_iz_i \in \F[W]_+^G$, concluding that $m \in \Hilb$,
a contradiction.

2) Otherwise if $\deg(m_{1}^{(i)}) =2$ for some $i$ then still there is a $j \neq i$ such that $ \deg(m_{1}^{(j)}) \ge 1$. 
After an application of {\bf C.} we may assume that $m$ is divisible by $ x_i^2x_j$.
But then $ m = \frac 1 3 \Delta^{j,i} (\tilde m)$ for the monomial $\tilde m := m x_i/x_j$  which falls under case 1) 
hence $m \in \Hilb$ by \eqref{Leibniz}, a contradiction.

3) Finally, if $\deg(m_1^{(1)}) = \ldots = \deg(m_1^{(n_1)}) =1$ then after an application of {\bf C.} we may assume that 
$x_1y_2z_3 \mid m$. Now consider the relation:
\begin{align}\label{polar}
\Delta^{1,2} (x_1y_1z_3) + \Delta^{2,3} (x_1 y_2 z_2) + \Delta^{3,1} (x_3 y_2 z_3) = 3 x_1y_2z_3 + \tau(x_3y_2z_1)
\end{align}
After multiplying \eqref{polar} with $m' := m / x_1y_2z_3 $ we get on the left hand side
$\Delta^{1,2} ( my_1/y_2) + \Delta^{2,3} (m z_2/z_3 ) + \Delta^{3,1} ( mx_3/x_1) \in \Hilb$ by \eqref{Leibniz}, as all the three monomials occurring here fall under case 2),
and on the right hand side $\tau(x_3y_2z_1)m' \in \Hilb$, whence $3m = 3x_1y_2z_3m' \in \Hilb$ follows.
This contradiction completes our proof.
\end{proof}

Now comparing Proposition~\ref{H3_also} and \ref{H3_felso}   immediately gives:

\begin{corollary}\label{H3}
If $\mathrm{char}(\F) \neq 3$ then $\beta(H_3) = 9$.
\end{corollary}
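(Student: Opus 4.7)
The statement is essentially a combination of the two preceding propositions, so the plan is almost forced. The strategy is to note that $\beta(H_3)$ is defined as the supremum of $\beta(H_3,W)$ over all $H_3$-modules $W$, and then squeeze this supremum between the lower bound provided by a specific example and the universal upper bound.

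For the lower bound, I would invoke Proposition~\ref{H3_also}, which exhibits a concrete module (namely $V = V_\omega$, the $3$-dimensional irreducible representation of $H_3$ on which the center acts by a primitive cube root of unity) with $\beta(H_3, V_\omega) \ge 9$. By the very definition $\beta(H_3) = \sup_W \beta(H_3, W)$, this immediately yields $\beta(H_3) \ge 9$ in any non-modular characteristic, in particular whenever $\mathrm{char}(\F) \neq 3$.

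For the matching upper bound, I would appeal to Proposition~\ref{H3_felso}, which establishes $\beta(H_3) \le 9$ under the same hypothesis $\mathrm{char}(\F) \neq 3$. Combining the two inequalities forces $\beta(H_3) = 9$, which is exactly the statement of the corollary.

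There is no real obstacle here since both directions are already in hand; the only thing to be careful about is the hypothesis on $\ch(\F)$, which is needed only for the upper bound (Proposition~\ref{H3_also} makes no such restriction on the characteristic beyond the implicit non-modular assumption required by the existence of a primitive third root of unity in $\F$). Thus the corollary follows in two lines with no further work.
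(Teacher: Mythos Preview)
Your proposal is correct and matches the paper's own argument, which simply says the corollary follows immediately by comparing Proposition~\ref{H3_also} (lower bound) with Proposition~\ref{H3_felso} (upper bound). There is nothing more to add.
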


\begin{remark}
It would be interesting to know if Theorem~\ref{beta_H_p} also extends to  the whole non-modular case,
i.e. for any field $\F$ whose characteristic does not divide $|G|$, just as it is the case for $p=3$ by the above result. 
\end{remark}

\section*{Acknowledgements}
\noindent
The author is grateful to M\'aty\'as Domokos for many valuable comments on the manuscript of this paper.
He also thanks the anonymous referee for many suggestions to improve the presentation of this material.

\end{document}